\newcommand{\eps}{\varepsilon}
\newcommand{\pr}[1]{\left( #1\right)}
\let\originalleft\left
\let\originalright\right
\renewcommand{\left}{\mathopen{}\mathclose\bgroup\originalleft}
\renewcommand{\right}{\aftergroup\egroup\originalright}
\newcommand{\es}[1]{\begin{equation}\begin{split}#1\end{split}\end{equation}}
\newcommand{\est}[1]{\begin{equation*}\begin{split}#1\end{split}\end{equation*}}
\newcommand{\R}{\mathbb{R}}
\newcommand{\C}{\mathbb{C}}
\newcommand{\Z}{\mathbb{Z}}
\renewcommand{\mod}[1]{~\pr{\textnormal{mod}~#1}}
\newcommand{\e}[1]{\operatorname{e}\pr{ #1}}
\newcommand{\tRe}{\textup{Re}}
\newtheorem{theorem}{Theorem}[section]
\newtheorem{proposition}{Proposition}[section]
\newtheorem{corollary}{Corollary}[section]
\newtheorem{lemma}{Lemma}[section]
\newtheorem*{remark*}{Remark}
\begin{document}

\allowdisplaybreaks

\title[Moments of the Riemann zeta-function]{A quadratic divisor problem and moments of the Riemann zeta-function}
\author{Sandro Bettin, H. M. Bui, Xiannan Li and Maksym Radziwi\l\l}
\address{Dipartimento di Matematica, Via Dodecaneso 35, 16146 Genova, ITALY}
\email{bettin@dima.unige.it}
\address{School of Mathematics, University of Manchester, Manchester M13 9PL, UK}
\email{hung.bui@manchester.ac.uk}
\address{Mathematics Department,138 Cardwell Hall, Manhattan, KS, 66506, USA}
\email{xiannan@ksu.edu}
\address{Department of Mathematics, McGill University, 805 Sherbrooke West, Montreal, Quebec, H3A 0B9, Canada}
\email{maksym.radziwill@gmail.com}

\begin{abstract}
We estimate asymptotically the fourth moment of the Riemann zeta-function twisted by a Dirichlet polynomial of length $T^{\frac14 - \varepsilon}$. Our work relies crucially on Watt's theorem on averages of Kloosterman fractions. In the context of the twisted fourth moment, Watt's result is an optimal replacement for Selberg's eigenvalue conjecture.
 
Our work extends the previous result of Hughes and Young, where Dirichlet polynomials of length $T^{\frac{1}{11}-\varepsilon}$ were considered. 
Our result has several applications, among others to the proportion of critical zeros of the Riemann zeta-function, zero spacing and lower bounds for moments. 

Along the way we obtain an asymptotic formula for a quadratic divisor problem, where the condition $a m_1 m_2 - b n_1 n_2 = h$ 
is summed with smooth averaging  on the variables
$m_1, m_2, n_1, n_2, h$ and arbitrary weights in the average on $a,b$. Using Watt's work allows us to exploit all averages
simultaneously. It turns out that averaging over $m_1, m_2, n_1, n_2, h$ right away in the quadratic divisor problem simplifies considerably the combinatorics of the main terms in the twisted fourth moment.  
\end{abstract}

\maketitle
\section{Introduction}

The Riemann zeta-function $\zeta(s)$ is intimately related to the study of prime numbers and other problems in number theory.  There are a number of famous conjectures in this area. Two distinguished examples are the Riemann Hypothesis, which states that all non-trivial zeros of $\zeta(s)$ are on the line $\tRe (s) = 1/2$, and the Lindel\"of Hypothesis, which states that $\zeta(1/2+it)\ll_\varepsilon (1+|t|)^\varepsilon$.

These two conjectures remain far out of reach.  However, methods in analytic number theory can prove that these conjectures are true on average.  An example of this is the study of moments of $\zeta(s)$.  To be more precise, let
$$I_k(T) = \int_0^T |\zeta(\tfrac{1}{2} + it)|^{2k}dt.
$$  Here, asymptotic formulae were proven for $k = 1$ by Hardy and Littlewood and for $k=2$ by Ingham (see [\ref{Ti}; Chapter VII]).  Note that the Lindel\"of Hypothesis is equivalent to $I_k(T) \ll_\eps T^{1+\eps}$ for all $k\in\mathbb{N}$.

The result of Ingham was useful in proving his zero density result (see, for example, \cite{Ti}), which also has applications to prime numbers.  Despite extensive further work, no such result is available for any other values of $k$.  However, results are available for twisted fourth moments of $\zeta(s)$, which may be considered to be somewhere between the $k=2$ result of Ingham and the open problem for $k=3$.  Let us define
\begin{equation*}
P(s)=\sum_{a\leq T^\vartheta}\frac{\alpha_a}{a^{s}}
\end{equation*} to be a Dirichlet polynomial of length $T^\vartheta$, with $\vartheta\geq 0$ and $\alpha_a\ll a^\eps$.  Then Watt's result in \cite{Watt} gives that
\begin{equation}\label{eqn:twistedmoment1}
\int_0^T|\zeta(\tfrac{1}{2}+it)|^4 |P(\tfrac{1}{2}+it)|^2 dt \ll_\eps T^{1+\varepsilon} 
\end{equation} for $\vartheta < 1/4$.  This is an improvement over the work of Deshouillers and Iwaniec \cite{DI}, which had a similar bound for $\vartheta < 1/5$, and the initial work of Iwaniec \cite{I}, which led to $\vartheta < 1/10$ just using the Weil bound. Despite appearances, this type of bound is not far removed from the prime number theory which inspired such questions.  For instance, the bound~\eqref{eqn:twistedmoment1} is useful in studying prime numbers in short intervals \cite{BHP}.

It is desirable to evaluate more precisely the quantity in~\eqref{eqn:twistedmoment1}, in view of various applications to the theory of the Riemann zeta-function, including the study of proportion of zeros on the critical line, gaps between zeros of the zeta-functions, and lower bounds for moments. Some of these consequences of our main results below have been in fact already worked out (see \cite{B, BHT, BM}) and have remained thus far conditional. 

Hughes and Young \cite{HY} obtained an asymptotic formula for 
\begin{equation*}
\int_0^T|\zeta(\tfrac{1}{2}+it)|^4 |P(\tfrac{1}{2}+it)|^2 dt
\end{equation*}when $\vartheta < 1/11$, and it is expected that this result remains true all the way for $\vartheta < 1$ (and in this range it implies the Lindel\"of Hypothesis).  In this paper, we prove the following.
\begin{theorem}\label{mt}
Let $T\geq2$ and let $\alpha,\beta,\gamma,\delta\in\C$ with $\alpha,\beta,\gamma,\delta\ll(\log T)^{-1}$. Furthermore, let $\Phi(x)$ be a smooth function supported in $[1, 2]$ with derivatives $\Phi^{(j)}(x) \ll_j T^\epsilon$ for any $j\geq0$.  Consider 
$$
A(s)= \sum_{a \leq T^{\vartheta}} \frac{\alpha_a}{a^s}\quad \text{ and }\quad B(s) = \sum_{b \leq T^{\vartheta}} \frac{\beta_b}{b^s},
$$
where $\alpha_a \ll a^{\varepsilon}$ and $\beta_b \ll b^{\varepsilon}$,
and let $I_{\alpha, \beta, \gamma, \delta}(T)$ denote
\est{
\int_\R \zeta\pr{\tfrac12+it+\alpha}\zeta\pr{\tfrac12+it+\beta}\zeta\pr{\tfrac12-it+\gamma}\zeta\pr{\tfrac12-it+\delta} A(\tfrac 12 + it) \overline{B(\tfrac 12 + it)}
\Phi\Big(\frac tT\Big)\,dt.
}
Define
\[
Z_{\alpha,\beta,\gamma,\delta,a,b}=A_{\alpha,\beta,\gamma,\delta}B_{\alpha,\beta,\gamma,\delta,a}B_{\gamma,\delta,\alpha,\beta,b},
\]
where
\begin{align*}
A_{\alpha, \beta, \gamma, \delta} =&   
\frac{\zeta(1 + \alpha + \gamma) \zeta(1 +\alpha + \delta) \zeta(1 + \beta + \gamma)\zeta(1+\beta+\delta)}{\zeta(2 + \alpha + \beta + \gamma + \delta)}, 
\end{align*}
\est{
B_{\alpha,\beta,\gamma,\delta,a}=\prod_{p^\nu||a}\left(\frac{\sum_{j=0}^{\infty}\sigma_{\alpha,\beta}(p^j)\sigma_{\gamma,\delta}(p^{j+\nu})p^{-j}}{\sum_{j=0}^{\infty}\sigma_{\alpha,\beta}(p^j)\sigma_{\gamma,\delta}(p^{j})p^{-j}}\right)
}
and $\sigma_{\alpha,\beta}(n)=\sum_{n_1n_2=n}n_{1}^{-\alpha}n_{2}^{-\beta}$. Then we have 
\begin{align*}
& I_{\alpha,\beta,\gamma,\delta}(T)=  \sum_{g} \sum_{(a,b) = 1} \frac{\alpha_{ga} \overline{\beta_{gb}}}{g ab}\int_\R\Phi\Big(\frac tT\Big)\bigg ( Z_{\alpha, \beta, \gamma, \delta,a,b} + \Big ( \frac{t}{2\pi} \Big )^{-\alpha - \beta - \gamma - \delta} Z_{-\gamma, -\delta, -\alpha, -\beta,a,b} \bigg ) \,dt\\ &\qquad\quad + 
\sum_{g} \sum_{(a,b) = 1} \frac{\alpha_{ga} \overline{\beta_{gb}}}{g ab}\int_\R\Phi\Big(\frac tT\Big)\bigg ( \Big ( \frac{t}{2\pi} \Big )^{-\alpha - \gamma} Z_{-\gamma, \beta,-\alpha, \delta,a,b}
+ \Big ( \frac{t}{2\pi} \Big )^{-\alpha - \delta} Z_{-\delta, \beta, \gamma, -\alpha,a,b} \\ & \qquad\qquad\qquad\qquad
  + \Big ( \frac{t}{2\pi} \Big )^{-\beta - \gamma} Z_{\alpha, -\gamma, -\beta, \delta,a,b}+ \Big ( \frac{t}{2\pi} \Big )^{-\beta - \delta} Z_{\alpha, -\delta, \gamma, -\beta,a,b} \bigg)\,dt\\
&\qquad\qquad\qquad\qquad\qquad\qquad+O_\eps\Big(T^{\frac12+2\vartheta + \varepsilon}+T^{\frac34+\vartheta+\varepsilon}\Big).
\end{align*}
\end{theorem}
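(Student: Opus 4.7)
The plan is to start from a smoothed approximate functional equation for the product $\zeta(\tfrac12+it+\alpha)\zeta(\tfrac12+it+\beta)\zeta(\tfrac12-it+\gamma)\zeta(\tfrac12-it+\delta)$, which expresses this product (together with its image under the functional-equation swap $\{\alpha,\beta\}\leftrightarrow\{-\gamma,-\delta\}$) as a sum
\est{
\sum_{m,n\geq 1}\frac{\sigma_{\alpha,\beta}(m)\sigma_{\gamma,\delta}(n)}{m^{1/2+it}n^{1/2-it}}\,V_t(m,n)
}
with $V_t$ a smooth cutoff essentially localizing $mn\ll t^2$. After inserting the twist $A(\tfrac12+it)\overline{B(\tfrac12+it)}$ and writing $a=ga'$, $b=gb'$ with $(a',b')=1$ to separate the common factor, $I_{\alpha,\beta,\gamma,\delta}(T)$ reduces to a weighted sum carrying the oscillatory factor $(am/bn)^{-it}$. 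Performing the $t$-integration against $\Phi(t/T)$ localizes its support on $am\approx bn$, where now $m=m_1m_2$ and $n=n_1n_2$ are otherwise free.

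The natural next step is to split into a \emph{diagonal} piece $am=bn$ and an \emph{off-diagonal} piece $am-bn=h\neq 0$. The diagonal part, together with its functional-equation dual, is a routine Euler-product computation that assembles precisely into the first two summands in the theorem, $Z_{\alpha,\beta,\gamma,\delta,a,b}$ and $(t/2\pi)^{-\alpha-\beta-\gamma-\delta}Z_{-\gamma,-\delta,-\alpha,-\beta,a,b}$; the arithmetic factor $B_{\alpha,\beta,\gamma,\delta,a}$ arises exactly as the local correction at primes dividing $a$ in this computation.

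For the off-diagonal I would invoke the quadratic divisor problem asymptotic promised in the abstract, namely an evaluation of
\est{
\sum_{a,b}\alpha_a\overline{\beta_b}\sum_{h\neq 0}\sum_{am_1m_2-bn_1n_2=h}F(m_1,m_2,n_1,n_2,h),
}
with $F$ the smooth weight produced by the AFE cutoffs and the $t$-integration. The heart of this evaluation is the $\delta$-symbol version of the circle method, with Watt's bound on averages of Kloosterman fractions as the key arithmetic input; Watt's theorem removes any need for Selberg's eigenvalue conjecture and is what ultimately produces the error $T^{1/2+2\vartheta+\eps}+T^{3/4+\vartheta+\eps}$. The main terms produced by this evaluation correspond to the four ``cross'' pairings of $\{\alpha,\beta\}$ with $\{-\gamma,-\delta\}$, and contribute exactly the remaining four $Z$-terms in the theorem, each carrying the appropriate power of $(t/2\pi)$ dictated by the shift being swapped.

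The main obstacle I anticipate is twofold. First, obtaining the off-diagonal asymptotic with the full smooth averaging over $m_1,m_2,n_1,n_2,h$ and \emph{arbitrary} weights in $a,b$ requires exploiting all five of those averages simultaneously via Watt's estimate and the spectral large sieve; the precise bookkeeping in the error terms is what fixes the range $\vartheta<1/4$. Second, matching the four cross main terms with the precise Euler-product shapes $Z_{*,*,*,*,a,b}$ demands a combinatorial identification that, as the abstract emphasizes, is greatly streamlined by performing the main-term extraction \emph{inside} the quadratic divisor problem rather than after the fact.
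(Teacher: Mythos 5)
Your overall architecture matches the paper: approximate functional equation, separation of the common divisor $g=(a,b)$, split of the resulting $am_1m_2=bn_1n_2+h$ sum into a diagonal ($h=0$) piece treated as an Euler-product computation and an off-diagonal ($h\neq 0$) piece treated as a quadratic divisor problem, and invocation of Watt's bound on sums of Kloosterman fractions to obtain the error term up to $\vartheta<1/4$. That much is a faithful skeleton of what the paper does.

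The central technical step is wrong, though, and it matters for the exponent. You propose to run the off-diagonal analysis through the \emph{$\delta$-method}, but the paper is explicit (see the introduction) that one of the two ingredients behind the improvement over Hughes--Young's $\vartheta<1/11$ is precisely \emph{abandoning} the $\delta$-method, which they identify as suboptimal here. What the paper does instead is elementary: after isolating the common divisor $d=(am_1,bn_1)$, it eliminates $n_2$ by writing $am_1m_2-bn_1n_2=\pm h$ as a congruence $m_2\equiv\pm (h/d)\overline{am_1/d}\ (\mathrm{mod}\ bn_1/d)$ and then applies Poisson summation directly in $m_2$. The $l=0$ term in Poisson produces the cross main terms; the $l\neq 0$ dual terms carry Kloosterman-type fractions $\mathrm{e}(\mp lh\,\overline{am_1/d}/(bn_1/d))$, and it is to these that Watt's bound (from \cite{Watt}, in the form reformulated in \cite{BCR}) and, in complementary ranges of $d$, Weil's bound are applied. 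The averaging over $a,b,m_1,n_1,h$ and the integration over $l$ are what Watt's theorem exploits all at once. The ``spectral large sieve'' is not invoked separately; Watt's estimate already packages the Deshouillers--Iwaniec Kloosterman-sum machinery. If you insist on the $\delta$-method, you would fall back to the Hughes--Young regime and not reach the stated error $T^{1/2+2\vartheta+\eps}+T^{3/4+\vartheta+\eps}$.

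One further point on the final assembly, which your sketch waves at. The four cross main terms in Theorem~\ref{mt} do not drop out of the $l=0$ Poisson term by hand; they arise after inserting a smooth partition $g(m_1/m_2)+g(m_2/m_1)=1$ (and the same in $n_1,n_2$), so that each of the four off-diagonal main terms corresponds to one quadrant $m_1\lessgtr m_2$, $n_1\lessgtr n_2$. The shift parameters in each $Z$-term are then produced by a contour-shift argument in which $G(s)$ and the Mellin transform $\widehat{g}$ are required to vanish at $\pm(\alpha-\beta)/2$, $\pm(\gamma-\delta)/2$, etc., so that only the pole at $s=0$ survives and the two halves of the AFE combine via the identity $\mathfrak{M}_{\alpha,\beta,\gamma,\delta}(s)=\mathfrak{M}_{-\delta,-\gamma,-\beta,-\alpha}(-s)$. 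Your sketch should make clear how this matching is made rather than simply asserting it.
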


\noindent\textbf{Remarks.}\begin{itemize}
\item Setting $A = B$ and letting the shifts $\alpha, \beta, \gamma, \delta \rightarrow 0$, Theorem~\ref{mt} implies an asymptotic formula for 
\begin{equation*}
\int_0^T |\zeta(\tfrac12+it)|^4 |P(\tfrac12+it)|^2 dt
\end{equation*}when $\vartheta < 1/4$, which should be compared to the $\vartheta <1/11$ restriction in the work of Hughes and Young \cite{HY}.
\item The above expression coincides with that obtained by Hughes and Young \cite{HY}. Here, the first two terms come from the diagonal, while the four remaining terms are the main terms coming from the off-diagonal contribution of sums of the following type
\begin{equation*}
\sum_{a m_1 m_2 - b n_1 n_2 = h \neq 0} \frac{\alpha_a \overline{\beta_b}}{m_1^{\alpha} m_2^{\beta} n_1^{\gamma} n_2^{\delta}}  f(a m_1 m_2, b n_1 n_2, h) K(m_1 m_2 n_1 n_2).
\end{equation*}
Each of the four possibilities where $n_1 < n_2$ or $n_1>  n_2$, $m_1 < m_2$ or $m_1> m_2$ contributes to exactly one of the off-diagonal main terms.
\item As mentioned in [\ref{HY}; page 207], the symmetries of the expression imply that the sum of the six main terms is holomorphic in terms of the shift parameters. The holomorphy of this permutation sum has been proved in [\ref{CFKRS}; Lemma 2.5.1]. In the remaining of the article, we impose the additional restrictions that $|\alpha\pm\beta|\gg(\log T)^{-1}$, etc. We note that the holomorphy of $I_{\alpha,\beta,\gamma,\delta}(T)$ and of the permutation sum leads to the holomorphy of the error term, and hence the maximum modulus principle can be applied to extend the error term to the enlarged domain.
\end{itemize}

Practically, it is however unnecessary to specify the Euler products $A_{\alpha, \beta, \gamma, \delta}$ and $B_{\alpha,\beta,\gamma,\delta,a}$. In various applications (for example, \cite{B, BHT, BM}), the resulting arithmetic factor can be worked out much more easily by incorporating the arithmetic properties of the sequences $\alpha_a$ and $\beta_b$. For that purpose we state a variant of Theorem~\ref{mt} below.

\begin{theorem}\label{variantmt}
Under the same assumptions as in Theorem~\ref{mt} we have
\begin{align*}
& I_{\alpha,\beta,\gamma,\delta}(T)= \sum_{a,b\leq T^\vartheta} \alpha_{a} \overline{\beta_{b}}\int_\R \Phi\Big(\frac tT\Big)\bigg (\widetilde{Z}_{\alpha, \beta, \gamma, \delta,a,b}(t) + \Big ( \frac{t}{2\pi} \Big )^{-\alpha - \beta - \gamma - \delta} \widetilde{Z}_{-\gamma, -\delta, -\alpha, -\beta,a,b}(t) \bigg )dt \\ &\qquad\quad + 
 \sum_{a,b\leq T^\vartheta}\alpha_{a} \overline{\beta_{b}}\int_\R \Phi\Big(\frac tT\Big)\bigg ( \Big ( \frac{t}{2\pi} \Big )^{-\alpha - \gamma} \widetilde{Z}_{-\gamma, \beta,-\alpha, \delta,a,b}(t)
+ \Big ( \frac{t}{2\pi} \Big )^{-\alpha - \delta} \widetilde{Z}_{-\delta, \beta, \gamma, -\alpha,a,b}(t) \\ & \qquad\qquad\qquad\qquad
  + \Big ( \frac{t}{2\pi} \Big )^{-\beta - \gamma} \widetilde{Z}_{\alpha, -\gamma, -\beta, \delta,a,b}(t)+ \Big ( \frac{t}{2\pi} \Big )^{-\beta - \delta} \widetilde{Z}_{\alpha, -\delta, \gamma, -\beta,a,b}(t) \bigg)dt\\
&\qquad\qquad\qquad\qquad\qquad\qquad+O_\eps\Big(T^{\frac12+2\vartheta + \varepsilon}+T^{\frac34+\vartheta+\varepsilon}\Big),
\end{align*}
where
\begin{align*}
\widetilde{Z}_{\alpha, \beta, \gamma, \delta,a,b}(t)=\sum_{am_1m_2=bn_1n_2}\frac{1}{(ab)^{\frac12}m_{1}^{\frac12+\alpha}m_{2}^{\frac12+\beta}n_{1}^{\frac12+\gamma}n_{2}^{\frac12+\delta}}V^*\Big(\frac{m_1m_2n_1n_2}{t^2}\Big)
\end{align*}
and the function $V^*(x)$ is defined as in~\eqref{aeqac}.
\end{theorem}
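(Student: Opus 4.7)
The plan is to derive Theorem~\ref{variantmt} as an algebraic rearrangement of Theorem~\ref{mt}: since the two asymptotic formulae share the same error term, it suffices to verify that each of the six symmetry-related main terms in Theorem~\ref{mt} can be rewritten in the $\widetilde Z$-shape up to an error absorbable into $T^{1/2+2\vartheta+\eps}+T^{3/4+\vartheta+\eps}$. First I would collapse the outer summation $\sum_g\sum_{(a,b)=1}$ of Theorem~\ref{mt} into the unrestricted sum $\sum_{A,B\le T^\vartheta}$ via the bijection $A=ga$, $B=gb$ with $g=(A,B)$, so that $\alpha_{ga}\overline{\beta_{gb}}$ becomes $\alpha_A\overline{\beta_B}$. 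Matters then reduce to an identity of the shape
\begin{equation*}
\widetilde Z_{\alpha,\beta,\gamma,\delta,ga,gb}(t) \;=\; \frac{1}{gab}\,Z_{\alpha,\beta,\gamma,\delta,a,b} \;+\; (\text{absorbable contribution}),\qquad (a,b)=1,
\end{equation*}
together with its analogues for the other five shift patterns.

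For the proof of this identity I would unfold the divisor sum: for $(a,b)=1$ the equation $(ga)m_1m_2=(gb)n_1n_2$ is equivalent to $am_1m_2=bn_1n_2$, parametrised by $k\ge 1$ via $m_1m_2=bk$, $n_1n_2=ak$. Using $m_1^{-1/2}m_2^{-1/2}=(m_1m_2)^{-1/2}$ and collapsing the inner divisor sums gives
\begin{equation*}
\widetilde Z_{\alpha,\beta,\gamma,\delta,ga,gb}(t) = \frac{1}{gab}\sum_{k\ge 1}\frac{\sigma_{\alpha,\beta}(bk)\,\sigma_{\gamma,\delta}(ak)}{k}\,V^*\!\Big(\frac{abk^2}{t^2}\Big).
\end{equation*}
Mellin inversion of $V^*$ then recasts the $k$-sum as a contour integral of the Dirichlet series $L_{a,b}(s):=\sum_{k\ge 1}\sigma_{\alpha,\beta}(bk)\sigma_{\gamma,\delta}(ak)\,k^{-1-2s}$. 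The key arithmetic step is a prime-by-prime Euler factorisation of $L_{a,b}(s)$ for coprime $a,b$: at $p\nmid ab$, Ramanujan's identity $\sum_n\sigma_{\alpha,\beta}(n)\sigma_{\gamma,\delta}(n)n^{-s}=\zeta(s+\alpha+\gamma)\zeta(s+\alpha+\delta)\zeta(s+\beta+\gamma)\zeta(s+\beta+\delta)/\zeta(2s+\alpha+\beta+\gamma+\delta)$ gives the local factor of $A_{\alpha+s,\beta+s,\gamma+s,\delta+s}$; at $p\,|\,a$ with exponent $\nu$, it gives precisely the $s$-shifted $p^\nu$-Euler factor appearing in the numerator of $B_{\alpha,\beta,\gamma,\delta,a}$; and symmetrically at $p\,|\,b$ for $B_{\gamma,\delta,\alpha,\beta,b}$. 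Consequently $L_{a,b}(0)=A_{\alpha,\beta,\gamma,\delta}\,B_{\alpha,\beta,\gamma,\delta,a}\,B_{\gamma,\delta,\alpha,\beta,b}=Z_{\alpha,\beta,\gamma,\delta,a,b}$, and shifting the Mellin contour across $s=0$ picks up $Z/(gab)$ as a residue, exactly reproducing the Theorem~\ref{mt} main term.

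The main obstacle I anticipate is controlling the residual contour contribution after displacing past $s=0$ and summing against the arbitrary coefficients $\alpha_a,\overline{\beta_b}$ with $a,b\le T^\vartheta$. On a shifted vertical line the convexity bound for the zeta factors in $L_{a,b}(s)$, combined with the rapid decay of $\widehat{V^*}(s)$ and the effective length $k\ll t/\sqrt{ab}$ of the $k$-sum, should yield a trivial bound that fits comfortably inside the permissible error $T^{1/2+2\vartheta+\eps}+T^{3/4+\vartheta+\eps}$. The six shift patterns are handled uniformly since the entire unfolding and Euler factorisation is insensitive to the signs of the shift parameters, so the remaining bookkeeping across the $(t/2\pi)$-twisted main terms is pure parameter relabelling.
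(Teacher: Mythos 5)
Your proposal follows essentially the same route as the paper: the paper likewise reduces to showing $Z_{\alpha,\beta,\gamma,\delta,a,b}=ab\,\widetilde Z_{\alpha,\beta,\gamma,\delta,a,b}(t)+O_\eps(T^{-(1-\vartheta)/2+\eps})$ for $(a,b)=1$, by unfolding $am_1m_2=bn_1n_2$ via the parametrisation $m_1m_2=bk$, $n_1n_2=ak$, recognising the resulting Dirichlet series $\sum_n\sigma_{\alpha,\beta}(bn)\sigma_{\gamma,\delta}(an)n^{-1-2s}$ as the Euler product $A_{\alpha,\beta,\gamma,\delta}(s)B_{\alpha,\beta,\gamma,\delta,a}(s)B_{\gamma,\delta,\alpha,\beta,b}(s)$, and then shifting the Mellin contour in $V^*$ past $s=0$, using the built-in zeros of $G$ at $-\tfrac12(\alpha+\gamma)$ etc.\ to ensure only the residue at $s=0$ is picked up. Your added remark making the reindexing $\sum_g\sum_{(a,b)=1}\leftrightarrow\sum_{A,B}$ explicit is correct and is left implicit in the paper.
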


\noindent\textbf{Remark.}\ \ Note that the function $V^*(x)$ satisfies $V^*(x)\ll_A(1+|x|)^{-A}$ for any fixed $A>0$, so Theorem~\ref{variantmt} shows a better structure of the main terms. This is the form suggested by following the recipe in \cite{CFKRS}.

 An important feature of our results is that we exploit the averaging over $a, b$ in the proof of the theorems. Thus stating the results for individual $a,b$ and then summing the error term would lead to an inferior bound. Another interesting feature is that since we arrive to the main terms from another direction, the combinatorics of the main terms turn out to be easier than in previous treatments.

Our results should also be contrasted with recent results in \cite{BCR}, where the length of $\vartheta$ was extended beyond $1/2$ for the twisted second moment, and where some expressions approaching those of Theorem~\ref{mt} were considered. In addition, the range $\vartheta < 1/4$ is optimal in the sense that assuming the Selberg eigenvalue conjecture does not lead to an extension of the range of $\vartheta$.  On the Selberg eigenvalue conjecture Motohashi \cite{M} has obtained an exact formula for the twisted fourth moment. However in his treatment an estimation of the error terms is lacking (and the average over $a$ and $b$ is not exploited), and should not in any case allow one to exceed $\vartheta =1/4$, as we will now explain. 
If the polynomial is chosen to be an amplifier of length $T^{\frac14-\eps}$, then results of this form lead to the Burgess style subconvexity bound $|\zeta(1/2+it)|\ll_\eps t^{\frac{3}{16}+\eps}$.  Since this bound is a natural barrier in other families of $L$-functions, it seems likely that we cannot improve the length of the polynomial without including new ingredients specific to $\zeta(s)$.

The improvement over the work of Hughes and Young \cite{HY} arises from two ingredients, both appearing in the treatment of a shifted convolution problem involving the divisor function.  The first is that we do not use the $\delta$-method, which turns out to be suboptimal in this application.  The second, and main reason for the improvement in our work, is the treatment of an exponential sum, which resembles a sum of Kloosterman sums.  In Hughes and Young's work, they use the Weil bound for Kloosterman sums, neglecting the possibility of further cancellation in the sum.  Our work takes advantage of further cancellation derived from spectral theory on $GL(2)$.  In particular, we use the exponential sum bound from Watt \cite{Watt}, which is based on the work of Deshouillers and Iwaniec \cite{DI2}. However, we also appeal in certain circumstances to the Weil bound, when Watt's result is not effective. 

The quadratic divisor problem that we obtain is likely to be useful in other work, and therefore we also state it here. For a function $f(x,y, z)$ decaying sufficiently fast at infinity, we let $\widehat f_3(x, y, s)$ denote the Mellin transform of $f$ with respect to the third variable and we write $\widehat f$ for the Mellin transform with respect to all three variables. 
Further, let $\widetilde{f}_{ \alpha, \beta, \gamma, \delta}(x, y; a,b,g)$ be
\begin{align}\label{dfntildef}
\frac{1}{2\pi i} \int_{(1+\eps)}\widehat{f}_3(x, y, s)\zeta(s) \zeta(1 + \alpha - \beta+ \gamma - \delta  + s) g^{- s} \eta_{\alpha,\beta,\gamma,\delta,a,b}(0,0,s) 
ds,
\end{align}
where $ \eta_{\alpha,\beta,\gamma,\delta,a,b}(u,v,s)$ is defined as in~\eqref{etanew}. Then we have the following. 
\begin{theorem}\label{thmqdp}
Let $A,B,X,Z,T\geq 1$ with $Z>XT^{-\eps}$ and $\log(ABXZ)\ll \log T$. Let $\alpha_a,\beta_b$ be sequences of complex numbers supported on $[1,A]$ and $[1,B]$, respectively, and such that $\alpha_a\ll A^\eps, \beta_b\ll B^\eps$. Let $f\in\mathcal C^{\infty}(\R_{\geq0}^{3})$ be such that
\est{
\frac{\partial ^{i+j+k}}{\partial x^i\partial y^j\partial z^k}f(x,y,z)\ll_{i,j,k,r} T^{\eps}(1+x)^{-i}(1+y)^{-j}(1+z)^{-k}\Big(1+\frac{z^2Z^2}{xy}\Big)^{-r}
}
for any $i,j,k,r\geq0$. Let $K\in\mathcal C^{\infty}(\R_{\geq0})$ be such that $K^{(j)}(x)\ll_{j,r} T^\eps (1+x)^{-j}(1+x/X^2)^{-r}$ for  any $j,r\geq0$. Then, writing 
$$
\mathcal{S} = \sum_{a m_1 m_2 - b n_1 n_2 = h > 0} \frac{\alpha_a \overline{\beta_b}}{m_1^{\alpha} m_2^{\beta} n_1^{\gamma} n_2^{\delta}} 
 f(a m_1 m_2, b n_1 n_2, h) K(m_1 m_2 n_1 n_2),
$$
where the sum runs over positive integers $a,b,m_1,m_2,n_1,n_2$ and $h$,
we have
$$
\mathcal{S} = \mathcal{M}_{\alpha, \beta, \gamma, \delta} + \mathcal{M}_{\beta, \alpha, \gamma, \delta} + \mathcal{M}_{\alpha, \beta, \delta, \gamma} + \mathcal{M}_{\beta, \alpha, \delta, \gamma} + \mathcal{E},
$$
where
\begin{align*}
 \mathcal{M}_{\alpha,\beta,\gamma,\delta}=& \frac{\zeta(1 + \alpha -\beta)\zeta(1 + \gamma-\delta)}{\zeta(2 + \alpha - \beta + \gamma- \delta)}\sum_{g}
\sum_{(a,b) = 1} \frac{\alpha_{ga} {\beta_{gb}}g}{(g a )^{1 -\beta} (g b)^{1-\delta }} 
 \\ &\qquad\qquad \int_{0}^{\infty} K\Big(\frac{x^2}{g^2ab}\Big)\widetilde{f}_{ \alpha, \beta, \gamma, \delta}(x, x; a,b,g) x^{-\beta-\delta } dx
\end{align*}
and the error term $\mathcal{E}$ is bounded by
\begin{align*}
\mathcal E  \ll& T^{\eps} (AB)^\frac12 XZ^{-\frac12}  \Big ( AB + (A+ B)^{\frac12} (AB)^\frac14X^\frac12Z^{-\frac14} \Big ).
\end{align*}
\end{theorem}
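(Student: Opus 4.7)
The plan is to separate variables via Mellin transforms, attack the shifted convolution $am_1m_2-bn_1n_2=h$ by Voronoi summation after first fixing $g=(a,b)$, extract the four main terms as residues produced during the Voronoi process, and bound the remaining dual Kloosterman-type sums by a combination of Watt's theorem~\cite{Watt} (an effective substitute for Selberg's eigenvalue conjecture in this setting) and the Weil bound in ranges where Watt is ineffective. A feature that has to be respected throughout is that the averages over $a$, $b$, and $h$ must be exploited simultaneously; treating any of these variables one at a time would not activate Watt's bound.

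First, using
$$
f(x,y,z)=\frac{1}{2\pi i}\int_{(c)}\widehat{f}_3(x,y,s)\,z^{-s}\,ds
$$
together with a Mellin decomposition of $K$, I would separate the variable $h$ and the product $m_1m_2n_1n_2$ from the remaining arithmetic data. Executing the sums over $m_1m_2=m$ and $n_1n_2=n$ at fixed $m$, $n$ yields generalized divisor functions $\sigma_{\alpha+u,\,\beta+u}(m)$ and $\sigma_{\gamma+u,\,\delta+u}(n)$. Writing $g=(a,b)$, $a=ga'$, $b=gb'$, and (forced) $h=gh'$ with $(a',b')=1$, the condition $a'm\equiv h'\pmod{b'}$ places $m$ in an arithmetic progression, so one can apply the Voronoi summation formula for $\sigma_{\alpha+u,\,\beta+u}$ in that progression, and the analogous operation on the $n$-sum modulo $a'$. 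Each Voronoi step produces (i) two residue contributions, at $s=1-\alpha-u$ and $s=1-\beta-u$ respectively on the $m$-side (and at $s=1-\gamma-u$ and $s=1-\delta-u$ on the $n$-side) from the poles of the attached zeta-factors, and (ii) a dual sum involving Kloosterman sums $S(h',\pm r;b')$ (respectively $S(\mp h',s;a')$) weighted by Bessel kernels. The four pairings $(\alpha,\gamma)$, $(\beta,\gamma)$, $(\alpha,\delta)$, $(\beta,\delta)$ of residue choices assemble into the four main terms $\mathcal{M}_{\alpha,\beta,\gamma,\delta}+\mathcal{M}_{\beta,\alpha,\gamma,\delta}+\mathcal{M}_{\alpha,\beta,\delta,\gamma}+\mathcal{M}_{\beta,\alpha,\delta,\gamma}$, with the arithmetic ratio $\zeta(1+\alpha-\beta)\zeta(1+\gamma-\delta)/\zeta(2+\alpha-\beta+\gamma-\delta)$ and the rest of the weight emerging from the comparison of the local Euler factors of the resulting Dirichlet series against the Mellin integral defining $\widetilde{f}_{\alpha,\beta,\gamma,\delta}$.

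The residual contribution of the dual Kloosterman-sum pieces, after dyadic decomposition of all remaining parameters, has precisely the shape of the averages of Kloosterman fractions bounded in~\cite{Watt} (which itself rests on the $\mathrm{GL}(2)$ spectral theory of Deshouillers--Iwaniec~\cite{DI2}): it is the simultaneous averaging over $a$, $b$, and $h$ that matches Watt's hypotheses. Invoking Watt's bound in the dyadic ranges where it is effective produces the contribution $T^{\varepsilon}(AB)^{3/4}(A+B)^{1/2}X^{3/2}Z^{-3/4}$ to $\mathcal{E}$, while in the complementary ranges---typically when one of the two moduli $a'$, $b'$ is too small to activate the spectral cancellation---one falls back on the Weil bound applied term-by-term, yielding the remaining $T^{\varepsilon}(AB)^{3/2}XZ^{-1/2}$ piece, the two together reconstructing the stated error. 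The principal technical obstacle will be here: one must arrange the exponential phases produced by Voronoi so as to match Watt's input verbatim, and then balance the Watt-vs.-Weil dichotomy optimally across every dyadic sub-range. A secondary difficulty is to justify the contour shifts that extract the main-term residues, which requires holomorphy of the intermediate expressions in the shift parameters---guaranteed under the ambient assumption $|\alpha\pm\beta|,\,|\gamma\pm\delta|\gg(\log T)^{-1}$ imposed throughout the paper.
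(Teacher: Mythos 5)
Your route is genuinely different from the paper's, and as written it has a real gap. The paper does \emph{not} apply Voronoi summation to the divisor data $\sigma_{\alpha,\beta}(m_1m_2)$, $\sigma_{\gamma,\delta}(n_1n_2)$, nor does it use any variant of the $\delta$-method; indeed it explicitly emphasizes that avoiding the $\delta$-method is one of the two ingredients behind the improvement over Hughes--Young. Instead, the paper keeps the four variables $m_1,m_2,n_1,n_2$ separate, inserts a smooth partition $\bigl(g(m_1/m_2)+g(m_2/m_1)\bigr)\bigl(g(n_1/n_2)+g(n_2/n_1)\bigr)=1$ so that, within each of the four resulting pieces, one effectively has $m_1\le m_2$ and $n_1\le n_2$, and then applies a \emph{single} Poisson summation in the largest variable $m_2$ (Proposition~\ref{dsff}). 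The $l=0$ frequency produces the main term; each of the four $g$-pieces produces exactly one of $\mathcal M_{\alpha,\beta,\gamma,\delta}$, $\mathcal M_{\beta,\alpha,\gamma,\delta}$, $\mathcal M_{\alpha,\beta,\delta,\gamma}$, $\mathcal M_{\beta,\alpha,\delta,\gamma}$. This is the sense in which the paper says it ``arrives at the main terms from another direction'' and that the combinatorics is lighter than in Hughes--Young; your plan of extracting the four main terms as the four pairings of Voronoi residues is the Hughes--Young combinatorics, not this paper's.

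The more serious issue is the error analysis. In your sketch, the nondominant contribution after the ``double Voronoi'' takes the form of a double sum weighted by a \emph{product} of Kloosterman sums $S(h',\pm r;b')\,S(\mp h',s;a')$, modulo two coprime moduli. That object is not ``precisely the shape of the averages of Kloosterman fractions bounded in Watt'': Watt's bound (Lemma~\ref{Watt}, following Deshouillers--Iwaniec) pertains to a single exponential of the form $\mathrm{e}\bigl(\pm hc\,\overline{rv}/(sp)\bigr)$, a genuine Kloosterman \emph{fraction}, not to a bilinear form in a tensor of two complete Kloosterman sums. The paper's single Poisson summation in $m_2$ is designed precisely to produce such a fraction $\mathrm{e}\bigl(\mp lh\,\overline{am_1/d}/(bn_1/d)\bigr)$, with the joint averages over $a,b,m_1,n_1,h,l$ simultaneously matching Watt's hypotheses. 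To make your double-Voronoi picture work you would need to decouple $m$ from $n$ first (i.e.\ the $\delta$-method or an additive-character dissection), after which the dual side is a different object requiring a different spectral input; that is exactly the route the paper argues is suboptimal at the level of the error term. So as stated, the step ``Invoking Watt's bound ... produces the contribution $T^{\varepsilon}(AB)^{3/4}(A+B)^{1/2}X^{3/2}Z^{-3/4}$'' does not follow from the structure you have produced.

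Two further points worth flagging. First, you did not mention the dyadic localization and the careful Watt-vs.-Weil split according to the size of $d=(am_1,bn_1)$, which is where the two pieces $T^\varepsilon(AB)^{3/2}XZ^{-1/2}$ and $T^\varepsilon(AB)^{3/4}(A+B)^{1/2}X^{3/2}Z^{-3/4}$ actually arise in Lemma~\ref{bfz} and Proposition~\ref{dsff}; the Weil bound is used for large $d$ precisely because Watt's hypotheses fail there. Second, the contour shifts that define $\widetilde f_{\alpha,\beta,\gamma,\delta}$ are made safe in the paper not by a sign condition alone but by the explicit requirement $\widehat g(\pm(\alpha-\beta)/2)=\widehat g(\pm(\gamma-\delta)/2)=0$, which suppresses the spurious poles at $u=-(\alpha-\beta)/2$ and $v=-(\gamma-\delta)/2$; it is worth building that into the construction rather than appealing to a lower bound on $|\alpha\pm\beta|$.
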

Another variant is stated in Section 4. We have chosen to state in the introduction 
the version that we will use to obtain Theorem~\ref{mt}. 
Here, as explained before, each of the four main terms comes from the four possibilities where $n_1 < n_2$ or $n_1> n_2$,
$m_1 < m_2$ or $m_1 > m_2$. To contrast our result with previous work, the novelty in our treatment is that we average over all 
possible parameters, while allowing the averages over $a,b$ to have arbitrary weights. 
In comparison, the $\delta$-method delivers a fairly poor range of admissible values of $a,b$. 
Finally, when $a = b = 1$ strong error terms have been obtained by Motohashi \cite{M1} exploiting the fact that there are no
exceptional eigenvalue for the Laplacian on $SL(2, \mathbb Z)\backslash \mathcal H$, for $\mathcal H$ the usual upper half plane. 

\section{Proof of Theorem~\ref{mt} and Theorem~\ref{variantmt}}
\subsection{The approximate functional equation}
We start by recalling the approximate functional equation.
\begin{lemma}[Approximate functional equation] 
Let $G(s)$ be an even entire function of rapid decay in any fixed strip $|\emph{Re}(s)|\leq C$ satisfying $G(0)=1$, and let
\est{
V_{\alpha,\beta,\gamma,\delta}(x,t)=\frac1{2\pi i}\int_{(1)}\frac{G(s)}{s}g_{\alpha,\beta,\gamma,\delta}(s,t)\pi^{-2s}x^{-s}\,ds,
}
where 
\est{
g_{\alpha, \beta, \gamma, \delta}(s,t) = 
\frac{\Gamma\left(\frac{\frac12 + \alpha + s +it}{2} \right)}{\Gamma\left(\frac{\frac12 + \alpha +it }{2} \right)} 
\frac{\Gamma\left(\frac{\frac12 + \beta + s +it}{2} \right)}{\Gamma\left(\frac{\frac12 + \beta +it }{2} \right)} 
\frac{\Gamma\left(\frac{\frac12 + \gamma + s -it}{2} \right)}{\Gamma\left(\frac{\frac12 + \gamma -it }{2} \right)}
\frac{\Gamma\left(\frac{\frac12 + \delta +s -it }{2} \right)}{\Gamma\left(\frac{\frac12 + \delta -it }{2} \right)}.
}
Furthermore, set
\est{
X_{\alpha,\beta,\gamma,\delta}(t) = \pi^{\alpha + \beta + \gamma + \delta} 
\frac{\Gamma\left(\frac{\frac12 -\alpha - it}{2}\right)}{\Gamma\left(\frac{\frac12 + \alpha + it}{2}\right)}
\frac{\Gamma\left(\frac{\frac12 -\beta - it}{2}\right)}{\Gamma\left(\frac{\frac12 + \beta + it}{2}\right)}
\frac{\Gamma\left(\frac{\frac12 -\gamma + it}{2}\right)}{\Gamma\left(\frac{\frac12 + \gamma - it}{2}\right)}
\frac{\Gamma\left(\frac{\frac12 -\delta + it}{2}\right)}{\Gamma\left(\frac{\frac12 + \delta - it}{2}\right)}
}
and 
\est{
 \widetilde V_{\alpha,\beta,\gamma,\delta}(x,t)=X_{-\gamma,-\delta,-\alpha,-\beta}(t)V_{\alpha,\beta,\gamma,\delta}(x,t).
}
Then we have
\begin{align}\label{afeq}
&\zeta({\tfrac12 + \alpha + it}) \zeta({\tfrac12 + \beta + it}) \zeta({\tfrac12 + \gamma - it}) \zeta({\tfrac12 + \delta - it})
\nonumber\\
&\qquad=\sum_{m,n} \frac{\sigma_{\alpha,\beta}(m) \sigma_{\gamma,\delta}(n)}{(mn)^{\frac12 }} \left(\frac{m}{n}\right)^{-it} V_{\alpha, \beta, \gamma, \delta} \left(mn,t \right)
\\
&\qquad\qquad+ 
   \sum_{m,n} \frac{\sigma_{-\gamma,-\delta}(m) \sigma_{-\alpha,-\beta}(n)}{(mn)^{\frac12 }} \left(\frac{m}{n}\right)^{-it} \widetilde V_{ -\gamma, -\delta, -\alpha, -\beta,} \left(mn,t\right) + O_A((1+|t|)^{-A}),\nonumber
\end{align}
for any fixed $A>0$.
\end{lemma}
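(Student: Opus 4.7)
The plan is to prove this by a standard Mellin--Barnes contour manipulation. I would start from the auxiliary integral
\[
J(t):=\frac{1}{2\pi i}\int_{(c)}\frac{G(s)}{s}\pi^{-2s}g_{\alpha,\beta,\gamma,\delta}(s,t)\prod_{\mu\in\{\alpha,\beta\}}\zeta(\tfrac12+\mu+s+it)\prod_{\mu\in\{\gamma,\delta\}}\zeta(\tfrac12+\mu+s-it)\,ds
\]
taken on a line $(c)$ with $c>1$, so that all four zetas converge absolutely as Dirichlet series. Expanding each zeta and interchanging summation with integration (valid because $G$ decays rapidly along vertical lines) immediately identifies $J(t)$ with the first double sum on the right-hand side of the statement, with weight $V_{\alpha,\beta,\gamma,\delta}(mn,t)$.

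The next step is to shift the contour from $(c)$ to $(-c)$. The simple pole at $s=0$ contributed by $1/s$ has residue $G(0)\pi^{0}g_{\alpha,\beta,\gamma,\delta}(0,t)\prod\zeta(\ldots)=\zeta(\tfrac12+\alpha+it)\zeta(\tfrac12+\beta+it)\zeta(\tfrac12+\gamma-it)\zeta(\tfrac12+\delta-it)$, which is exactly the quantity we want to approximate. All other poles encountered during the shift (the four simple poles of the zetas at $s=\tfrac12-\alpha-it$, $\tfrac12-\beta-it$, $\tfrac12-\gamma+it$, $\tfrac12-\delta+it$, as well as any poles of the numerator gamma factors in $g_{\alpha,\beta,\gamma,\delta}(s,t)$) lie at imaginary part of size $\asymp|t|$; each residue carries a factor of $G$ evaluated at one of these points, and the rapid decay of $G$ makes them $O_A((1+|t|)^{-A})$, so they are absorbed into the error term.

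On the shifted line $(-c)$, I would apply the functional equation $\zeta(w)=\pi^{w-1/2}\Gamma((1-w)/2)\Gamma(w/2)^{-1}\zeta(1-w)$ to each of the four zetas and then substitute $s\mapsto -s$, using that $G$ is even so that $G(-s)/(-s)$ combined with the reversed orientation returns $G(s)/s$. After these manipulations, the four powers of $\pi$ produced by the functional equations combine with the original $\pi^{-2s}$ to leave $\pi^{-2s}\cdot\pi^{\alpha+\beta+\gamma+\delta}$, while each gamma ratio in $g_{\alpha,\beta,\gamma,\delta}(s,t)$ merges with the corresponding new gamma ratio from the functional equation to produce the relevant factor in $X_{\alpha,\beta,\gamma,\delta}(t)\cdot g_{-\gamma,-\delta,-\alpha,-\beta}(s,t)$. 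The resulting integrand is precisely the one defining $X_{\alpha,\beta,\gamma,\delta}(t)\,V_{-\gamma,-\delta,-\alpha,-\beta}(mn,t)=\widetilde V_{-\gamma,-\delta,-\alpha,-\beta}(mn,t)$, and a final Dirichlet-series expansion yields the second double sum in the statement.

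The main obstacle is really only the gamma-factor bookkeeping, which is routine but delicate: for each shift $\mu\in\{\alpha,\beta,\gamma,\delta\}$, the denominator of the $g$-ratio cancels the numerator of the functional-equation ratio, and one must carefully keep track of which shifts are paired with $+it$ versus $-it$ in $g_{-\gamma,-\delta,-\alpha,-\beta}(s,t)$ and in $X_{\alpha,\beta,\gamma,\delta}(t)$ to see that the pieces reassemble correctly. Once this algebraic check is made, the desired identity follows.
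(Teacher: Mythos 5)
Your proposal is correct. The paper does not actually give its own proof of this lemma; it simply cites Proposition 2.1 of Hughes and Young, and the Mellin--Barnes contour-shift argument you outline (Dirichlet-series expansion on $\mathrm{Re}\,s=c>1$, pick up the residue at $s=0$, absorb the residues at $\mathrm{Im}\,s\asymp|t|$ via the rapid decay of $G$, apply the functional equation on $\mathrm{Re}\,s=-c$ and substitute $s\mapsto-s$ using the evenness of $G$) is precisely the standard argument used there. Your gamma-factor bookkeeping also checks out: after the substitution the four $\pi$-powers from the functional equations combine with $\pi^{-2s}$ to give $\pi^{\alpha+\beta+\gamma+\delta}\pi^{-2s}$, and the surviving gamma ratio is exactly $X_{\alpha,\beta,\gamma,\delta}(t)\,g_{-\gamma,-\delta,-\alpha,-\beta}(s,t)$, which upon re-expanding the four zetas as Dirichlet series yields the second double sum with weight $\widetilde V_{-\gamma,-\delta,-\alpha,-\beta}$.
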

\begin{proof}
See Proposition 2.1 of~\cite{HY}.
\end{proof}

\noindent\textbf{Remarks.}\begin{itemize}
\item As mentioned in \cite{HY}, it is convenient to prescribe certain conditions on the function $G(s)$. To be precise, we assume $G(s)$ is divisible by an even polynomial $Q_{\alpha,\beta,\gamma,\delta}(s)$, which is symmetric in the parameters $\alpha,\beta,\gamma,\delta$, invariant under the transformations $\alpha\rightarrow-\alpha$, $\beta\rightarrow-\beta$, etc. and zero at $s=-\frac{(\alpha+\gamma)}{2}$ (as well as other points by symmetry), and that $G(s)/Q_{\alpha,\beta,\gamma,\delta}(s)$ is independent of $\alpha,\beta,\gamma,\delta$. An admissible choice is $Q_{\alpha,\beta,\gamma,\delta}(s)\exp(s^2)$ for such $Q_{\alpha,\beta,\gamma,\delta}(s)$, but there is no need to specify a particular function $G(s)$.

\item For $t$ large and $s$ in any fixed vertical strip Stirling's approximation gives
\begin{equation}\label{Xbound}
X_{\alpha,\beta,\gamma,\delta}(t)=\Big(\frac{t}{2\pi}\Big)^{-\alpha-\beta-\gamma-\delta}\big(1+O(t^{-1})\big)
\end{equation}
and
\begin{equation}\label{g}
g_{\alpha, \beta, \gamma, \delta}(s,t)=\Big(\frac{t}{2}\Big)^{2s}\Big(1+O\big(t^{-1}(1+|s|^2)\big)\Big).
\end{equation}
Moreover, for any fixed $A>0$ we have
\begin{equation}\label{V(x,t)}
t^j\frac{\partial^j}{\partial t^j}V_{\alpha, \beta, \gamma, \delta} \left(x,t \right)\ll_{A,j}(1+|x|/t^2)^{-A} .
\end{equation}
\end{itemize}

\subsection{Initial manipulations}

Applying the approximate functional equation~\eqref{afeq}, we see that
\est{
I_{\alpha,\beta,\gamma,\delta}(T)=J_{\alpha,\beta,\gamma,\delta}(T)+\widetilde J_{-\gamma,-\delta,-\alpha,-\beta}(T)+ O_A(T^{-A}),
}
for any fixed $A>0$, where
\est{
J_{\alpha,\beta,\gamma,\delta}(T)&=\sum_{a,b\leq T^\vartheta}\sum_{m_1,m_2,n_1,n_2}\frac {\alpha_a \overline {\alpha_{b}}}{(ab)^{\frac12}m_{1}^{\frac12+\alpha}m_{2}^{\frac12+\beta}n_{1}^{\frac12+\gamma}n_{2}^{\frac12+\delta}}\\
&\qquad\qquad\int_\R \Big(\frac{am_1m_2}{bn_1n_2}\Big)^{-it} V_{\alpha, \beta, \gamma, \delta} \left(m_1m_2n_1n_2,t \right)
 \Phi\Big(\frac tT\Big)\,dt
}
and $\widetilde J$ is the same sum, but with $\widetilde V$ in place of $V$. Write
\est{
J_{\alpha,\beta,\gamma,\delta}(T)=\mathcal M_{1;\alpha,\beta,\gamma,\delta}(T)+J^{*}_{\alpha,\beta,\gamma,\delta}(T)
}
and
\est{
\widetilde{J}_{-\gamma,-\delta,-\alpha,-\beta}(T)=\mathcal M_{2;-\gamma,-\delta,-\alpha,-\beta}(T)+\widetilde{J}^{*}_{-\gamma,-\delta,-\alpha,-\beta}(T),
}
where 
\est{
\mathcal M_{1;\alpha,\beta,\gamma,\delta}(T) &= \sum_{a,b\leq T^\vartheta}\sum_{\substack{m_1,m_2,n_1,n_2\\ am_1m_2=bn_1n_2}}\frac {\alpha_a \overline {\alpha_{b}}}{(ab)^{\frac12}m_{1}^{\frac12+\alpha}m_{2}^{\frac12+\beta}n_{1}^{\frac12+\gamma}n_{2}^{\frac12+\delta}}\\
&\qquad\qquad\int_\R  V_{\alpha, \beta, \gamma, \delta} \left(m_1 m_2 n_1 n_2 , t \right)
 \Phi\Big(\frac tT\Big)\,dt,
}
\est{
J^{*}_{\alpha,\beta,\gamma,\delta}(T)&=\sum_{a,b\leq T^\vartheta}\sum_{\substack{m_1,m_2,n_1,n_2\\ am_1m_2-bn_1n_2=h\neq0}}\frac {\alpha_a \overline {\alpha_{b}}}{(ab)^{\frac12}m_{1}^{\frac12+\alpha}m_{2}^{\frac12+\beta}n_{1}^{\frac12+\gamma}n_{2}^{\frac12+\delta}}\\
&\qquad\qquad\int_\R \Big(1+\frac{h}{bn_1n_2}\Big)^{-it} V_{\alpha, \beta, \gamma, \delta} \left(m_1m_2n_1n_2,t \right)
 \Phi\Big(\frac tT\Big)\,dt
}
and $\mathcal M_{2;-\gamma,-\delta,-\alpha,-\beta}$ and $\widetilde{J}^{*}_{-\gamma,-\delta,-\alpha,-\beta}$ being similar expressions.

\subsection{The diagonal terms}

As in Hughes and Young [\ref{HY}; Proposition 3.1] we have
 \begin{align*}
\mathcal M_{1;\alpha,\beta,\gamma,\delta}(T)= \sum_{g} \sum_{(a,b) = 1} \frac{\alpha_{ga} \overline{\beta_{gb}}}{g ab}\int_\R\Phi\Big(\frac tT\Big) Z_{\alpha, \beta, \gamma, \delta,a,b} dt+O_\eps(T^{\frac12+\frac\vartheta2+\eps}).
\end{align*}
Notice that when moving the line of integration to $\tRe(s)=-1/4+\eps$ in their equation (47), we cross only a simple pole at $s=0$. This is because of the cancellation of the zeros of the function $G(s)$ at $-\frac{(\alpha+\gamma)}{2}$, etc. with the poles of the zeta-functions in the formula. 

Similarly,
 \begin{align*}
\mathcal M_{2;-\gamma,-\delta,-\alpha,-\beta}(T)=&\sum_{g} \sum_{(a,b) = 1} \frac{\alpha_{ga} \overline{\beta_{gb}}}{g ab} \int_\R\Phi\Big(\frac tT\Big)\Big ( \frac{t}{2\pi} \Big )^{-\alpha - \beta - \gamma - \delta}  Z_{-\gamma, -\delta, -\alpha, -\beta,a,b}dt\\
&\qquad\qquad +O_\eps(T^{\frac12+\frac\vartheta2+\eps}).
\end{align*}

\subsection{The off-diagonal terms}

We first evaluate $J^*_{\alpha,\beta,\gamma,\delta}$. In view of~\eqref{V(x,t)}, the summands in $J^*_{\alpha,\beta,\gamma,\delta}(T)$ with $m_1m_2n_1n_2\gg T^{2+\eps}$ give a negligible contribution. Also, by integration by parts we have
\est{
\int_\R \Big(1+\frac{h}{bn_1n_2}\Big)^{-it} V_{\alpha, \beta, \gamma, \delta} \left(m_1m_2n_1n_2,t \right) \Phi\Big(\frac tT\Big)\,dt \ll_j \frac{T}{(h/\sqrt{abm_1m_2n_1n_2})^jT^j}
}
for any fixed $j\geq0$. So the contribution of the terms with $|h|>\sqrt{abm_1m_2n_1n_2}\,T^{-1+\eps}$ is $O_A(T^{-A})$ for any fixed $A>0$. Hence
\est{
J^{*}_{\alpha,\beta,\gamma,\delta}(T)&=\sum_{a,b\leq T^\vartheta}\sum_{\substack{m_1m_2n_1n_2\ll T^{2+\varepsilon}\\ am_1m_2-bn_1n_2=h\\0<|h|\leq\sqrt{abm_1m_2n_1n_2}\,T^{-1+\eps}}}\frac {\alpha_a \overline {\alpha_{b}}}{(ab)^{\frac12}m_{1}^{\frac12+\alpha}m_{2}^{\frac12+\beta}n_{1}^{\frac12+\gamma}n_{2}^{\frac12+\delta}}\\
&\qquad\qquad\int_\R \Big(1+\frac{h}{bn_1n_2}\Big)^{-it} V_{\alpha, \beta, \gamma, \delta} \left(m_1m_2n_1n_2,t \right)
 \Phi\Big(\frac tT\Big)\,dt+O_A(T^{-A}).
}

Note that a trivial bound gives
\begin{eqnarray}\label{trbound}
J^{*}_{\alpha,\beta,\gamma,\delta}(T)&\ll_\varepsilon& T^{1+\varepsilon}\sum_{a,b\leq T^\vartheta}\sum_{\substack{m_1m_2n_1n_2\ll T^{2+\varepsilon}\\ am_1m_2-bn_1n_2=h\\0<|h|\leq\sqrt{abm_1m_2n_1n_2}\,T^{-1+\eps}}}\frac{1}{(abm_1m_2n_1n_2)^\frac12}+O_A(T^{-A})\nonumber\\
&\ll_\varepsilon& T^{1+\vartheta+\varepsilon},
\end{eqnarray}
where the last estimate comes from letting $a,m_1,m_2$ and $h$ vary freely and bounding the number of values of $b,n_1,n_2$ by the number of divisors of $am_1m_2-h$. For $|h|\leq\sqrt{abm_1m_2n_1n_2}\,T^{-1+\eps}$, we have
\est{
\Big(1+\frac{h}{bn_1n_2}\Big)^{-it} =\textrm{e}\Big(-\frac{th}{2\pi bn_1n_2}\Big)+O_\varepsilon(T^{-1+\eps}).
}
Thus, using the trivial bound~\eqref{trbound} we get
\est{
J^{*}_{\alpha,\beta,\gamma,\delta}(T)&=\sum_{a,b\leq T^\vartheta}\sum_{\substack{m_1m_2n_1n_2\ll T^{2+\eps}\\ am_1m_2-bn_1n_2=h\neq0}}\frac {\alpha_a \overline {\alpha_{b}}}{(ab)^{\frac12}m_{1}^{\frac12+\alpha}m_{2}^{\frac12+\beta}n_{1}^{\frac12+\gamma}n_{2}^{\frac12+\delta}}\psi\Big(\frac{h^2T^{2-\eps}}{abm_1n_1m_2n_2}\Big)\\
&\qquad\qquad\int_\R \textrm{e}\Big(-\frac{th}{2\pi bn_1n_2}\Big) V_{\alpha, \beta, \gamma, \delta} \left(m_1m_2n_1n_2,t \right)
\Phi\Big(\frac tT\Big)\,dt+O_\varepsilon\pr{T^{\vartheta+\eps}},
}
where $\psi(x)$ is a function that is identically $1$ for $0\leq x\leq1$ and decays rapidly at infinity. 

Now, define
\es{\label{aeqac}
V^*(x)=\frac1{2\pi i}\int_{(1)}\frac{G(s)}{s}(2\pi)^{-2s}x^{-s}\,ds.
}
The estimate~\eqref{g} implies that $V_{\alpha,\beta,\gamma,\delta}(x,t)=V^*(x/t^2)+O_\varepsilon(t^{-1+2\eps}x^{-\eps})$. In particular, we can replace $V_{\alpha, \beta, \gamma, \delta}(x,t)$ with $V^*(x/t^2)$ in the above expression at the cost of an error of size $O_\varepsilon(T^{\vartheta+\eps})$. Grouping the terms $h$ and $-h$ allows us to replace
$\e{- t {h}/{2 \pi b n_1 n_2} }$ by $2 \cos ( t {h}/{b n_1 n_2}  )$ and the condition $h \neq 0$ is now replaced by $h > 0$. Thus
\begin{align*}
J^*_{\alpha,\beta,\gamma,\delta}(T) = & 2 \int_{\mathbb{R}}\sum_{a,b \leq T^{\theta}} \sum_{\substack{a m_1 m_2 - b n_1 n_2 = h > 0}}
\frac{\alpha_a \overline{\alpha_b} }{(ab)^{\frac12}m_{1}^{\frac12+\alpha}m_{2}^{\frac12+\beta}n_{1}^{\frac12+\gamma}n_{2}^{\frac12+\delta}}\psi\Big(\frac{h^2T^{2-\eps}}{abm_1n_1m_2n_2}\Big)
 \\
&\qquad\qquad  \cos \Big ( \frac{t h}{b n_1 n_2} \Big )  V^*\Big(\frac{m_1 m_2 n_1 n_2}{t^2}\Big) \Phi \Big ( \frac{t}{T} \Big ) dt + O_\varepsilon(T^{\vartheta + \varepsilon}).
\end{align*}

To the inner sum we apply our result on the quadratic divisor problem in the form of Theorem~\ref{thmqdp} (using partial summation before and after applying the theorem) with
\begin{align*}
f(x,y, z) & = 
\cos \Big ( \frac{t z}{y} \Big ) \psi\Big(\frac{z^2T^{2-\eps}}{xy}\Big),\quad  K(x)=V^*\Big(\frac{x}{t^2}\Big),\quad Z=T^{1-\eps}\quad\textrm{and}\quad X=t.
\end{align*}
We then get four main terms
$$
J^*_{\alpha,\beta,\gamma,\delta}(T) =\mathcal{M}^*_{\alpha,\beta,\gamma,\delta}(T) + \mathcal{M}^*_{\beta,\alpha,\gamma,\delta}(T)
+ \mathcal{M}^*_{\alpha,\beta,\delta,\gamma}(T) + \mathcal{M}^*_{\beta,\alpha,\delta,\gamma}(T)+\mathcal E  $$
with the error being bounded by
\est{
\mathcal E  
&\ll_\varepsilon T^{\frac12+\eps}(T^{2\vartheta}+T^{\frac 14+\vartheta}).
}

Let us focus on the first main term.
We have 
\est{
 \mathcal{M}^*_{\alpha,\beta,\gamma,\delta}(T) &= 2\frac{\zeta(1 + \alpha - \beta)\zeta(1 +\gamma - \delta)}{\zeta(2 + \alpha - \beta + \gamma - \delta)}\sum_{g}
\sum_{(a,b) = 1} \frac{\alpha_{ga} {\beta_{gb}}g}{(g a )^{1  - \beta} (g b)^{1 -\delta }} 
 \\ &\quad\int_\R \int_{0}^{\infty} V^*\Big(\frac{x^2}{t^2g^2ab}\Big)\widetilde{f}_{\alpha,\beta, \gamma,\delta}(x, x; a,b,g) x^{-1-\beta -\delta} \Phi \Big ( \frac{t}{T} \Big )dx\, dt,
}
where $\widetilde{f}_{ \alpha, \beta,\gamma, \delta}(x, x; a,b,g)$ is equal to 
\begin{align*}
\frac{1}{2\pi i} \int_{(1+\eps)}\widehat{f}_3(x, x, s)\zeta(s) \zeta(1 + \alpha - \beta+ \gamma - \delta  + s) g^{- s} \eta_{\alpha,\beta,\gamma,\delta,a,b}(0,0,s) 
ds,
\end{align*}
$\widehat{f}_3$ is the Mellin transform of $f(x,y,z)$ with respect to $z$ and $ \eta_{\alpha,\beta,\gamma,\delta,a,b}(u,v,s)$ is a finite Euler product defined as in~\eqref{etanew}. After a change of variable we have
\est{
\widehat{f}_3(x,x,s)=x^{s}\int _{0}^{\infty} \cos  (t u ) \psi(u^2T^{2-\eps})u^{s-1}\,du.
} 
The integral over $u$ can be expressed as a convolution of Mellin transform\footnote{Since $\int_{0}^\infty\cos(tx)x^{w-1}\,dx=t^{-w}\Gamma(w)\cos(\frac{\pi w}{2})$ for $0<\Re(w)<1$ and $t>0$.}, so
\est{
\widehat{f}_3(x,x,s)=x^s\frac1{2\pi i}\int_{(0)}\widehat\psi(z)T^{-(2-\eps)z}\Gamma(s-2z)\cos\Big(\frac{\pi }{2}(s-2z)\Big)t^{2z-s}\,dz.
}
We move the line of integration to $\tRe(z)=-A$ for some large $A>0$, collecting a residue at $z=0$ only (since $\widehat\psi(z)$ has a simple pole of residue $1$ at $z=0$). Taking $A$ large enough with respect to $\eps$ we obtain
\est{
\widehat{f}_3(x,x,s)=x^{s}\Gamma(s)\cos\Big(\frac{\pi s}{2}\Big)t^{-s}+O_{A,\eps}(x^{1+\eps}T^{-A}),
}
since $t\asymp T$. We can ignore the $O$-term as this contributes an error of size $O_A(T^{-A})$.

Now we evaluate the integral over $x$ obtaining
\est{
  \int_{0}^{\infty} V^*\Big(\frac{x^2}{t^2g^2ab}\Big) x^{- 1 - \beta - \delta+s} dx = (t g \sqrt{a b})^{ - \beta - \delta+s}
 (2\pi)^{ \beta+ \delta-s} \frac{G \pr{\frac{ - \beta - \delta+s}{2}}}{ - \beta - \delta+s} 
}
by the Mellin expression~\eqref{aeqac} for $V^*(x)$. Thus, we obtain
\est{
 \mathcal{M}^*_{\alpha,\beta,\gamma,\delta}(T)=\mathcal{M}^{**}_{\alpha,\beta,\gamma,\delta}(T)+O_A(T^{-A}),
}
where
\est{
 \mathcal{M}^{**}_{\alpha,\beta,\gamma,\delta}(T) &= 2\sum_{g}
\sum_{(a,b) = 1} \frac{\alpha_{ga} {\beta_{gb}}}{g  a ^{1  - \beta} b^{1 -\delta }}
\frac{\zeta(1 + \alpha - \beta)\zeta(1 +\gamma - \delta)}{\zeta(2 + \alpha - \beta + \gamma - \delta)}\\
&\frac{1}{2\pi i}\int_\R \int_{(1+\eps)}(t/2\pi)^{-\beta-\delta}\frac{G \pr{\frac{ - \beta - \delta+s}{2}}}{ - \beta - \delta+s}(ab)^{(-\beta-\delta+s)/2}(2\pi)^{-s}\Gamma(s)\cos\pr{\frac{\pi s}{2}}\\
&\qquad\qquad  \zeta(s) \zeta(1 + \alpha - \beta+ \gamma - \delta  + s) \eta_{\alpha,\beta,\gamma,\delta,a,b}(0,0,s)\Phi \Big ( \frac{t}{T} \Big ) dsdt.
}
Applying the functional equation $\zeta(1-s)=2(2\pi)^{-s}\Gamma(s)\cos\pr{\frac{\pi s}{2}}\zeta(s)$ and making the change of variable $s\rightarrow \beta+\delta+2s$ we arrive to
 \est{
 \mathcal{M}^{**}_{\alpha,\beta,\gamma,\delta}(T) &= \frac{\zeta(1 + \alpha - \beta)\zeta(1 +\gamma - \delta)}{\zeta(2 + \alpha - \beta + \gamma - \delta)}\\
&\qquad\qquad\qquad\frac{1}{2\pi i}\int_\R\Phi \Big ( \frac{t}{T} \Big )(t/2\pi)^{-\beta-\delta} \int_{(1+\eps)}\mathfrak{M}_{\alpha,\beta,\gamma,\delta}(s)\frac{G(s)}{ s} dsdt,
}
where
 \begin{eqnarray*}
 \mathfrak{M}_{\alpha,\beta,\gamma,\delta}(s) &=& \zeta(1 + \alpha+ \gamma   + 2s)\zeta(1-\beta-\delta-2s)\nonumber
\\
&&\qquad   \sum_{g}
\sum_{(a,b) = 1} \frac{\alpha_{ga} {\beta_{gb}}}{g  a ^{1  - \beta-s} b^{1 -\delta-s }} \eta_{\alpha,\beta,\gamma,\delta,a,b}(0,0,\beta+\delta+2s).
\end{eqnarray*}
In summary we have
$$
J^*_{\alpha, \beta, \gamma, \delta}(T) = \mathcal{M}^{**}_{\alpha, \beta, \gamma, \delta} (T)+ \mathcal{M}^{**}_{\beta, \alpha, \gamma, \delta}(T)
+ \mathcal{M}^{**}_{\alpha, \beta, \delta, \gamma} (T)+ \mathcal{M}^{**}_{\beta, \alpha, \delta, \gamma}(T) + \mathcal{E},
$$
where $\mathcal{E} \ll_\eps T^{\frac12 + \varepsilon} (T^{2 \vartheta} + T^{\vartheta + \frac14})$.

On the other hand, proceeding identically to the above we also
find that
\begin{align*}
  \widetilde{J}_{-\gamma, -\delta, -\alpha, -\beta}^{*}(T) =& \widetilde{\mathcal{M}}^{**}_{-\gamma, -\delta, -\alpha, -\beta}(T)
+ \widetilde{\mathcal{M}}^{**}_{-\delta, -\gamma, -\alpha, -\beta}(T)\\
&\qquad\qquad + \widetilde{\mathcal{M}}^{**}_{-\gamma, -\delta, -\beta, -\alpha}(T)
+ \widetilde{\mathcal{M}}^{**}_{-\delta, -\gamma, -\beta, -\alpha}(T) + \widetilde{\mathcal{E}},
\end{align*}
where $\widetilde{\mathcal{E}} \ll_\eps  T^{\frac12 + \varepsilon} (T^{2 \vartheta} + T^{\vartheta + \frac14})$ and where, for example,
 \est{
&\widetilde{\mathcal{M}}^{**}_{-\delta, -\gamma, -\beta, -\alpha}(T) = \frac{\zeta(1 + \alpha - \beta)\zeta(1 +\gamma - \delta)}{\zeta(2 + \alpha - \beta + \gamma - \delta)}\\
&\qquad\qquad\frac{1}{2\pi i}\int_\R\Phi \Big ( \frac{t}{T} \Big )X_{\beta,\alpha,\delta,\gamma}(t)(t/2\pi)^{\alpha+\gamma} \int_{(1+\eps)}\frac{G(s)}{ s}\mathfrak{M}_{-\delta, -\gamma, -\beta, -\alpha}(s) dsdt.
}
In view of~\eqref{Xbound} we get
 \est{
& \mathcal{M}^{**}_{\alpha, \beta, \gamma, \delta}(T)+\widetilde{\mathcal{M}}^{**}_{-\delta, -\gamma, -\beta, -\alpha}(T) = \frac{\zeta(1 + \alpha - \beta)\zeta(1 +\gamma - \delta)}{\zeta(2 + \alpha - \beta + \gamma - \delta)}\frac{1}{2\pi i}\int_\R\Phi \Big ( \frac{t}{T} \Big )(t/2\pi)^{-\beta-\delta} \\
&\qquad\qquad\int_{(1+\eps)}\frac{G(s)}{ s}\Big(\mathfrak{M}_{\alpha,\beta,\gamma,\delta}(s)+\mathfrak{M}_{-\delta, -\gamma, -\beta, -\alpha}(s) \Big)dsdt+O_\eps(T^\eps).
}
It is a standard exercise to check that $\mathfrak{M}_{\alpha,\beta,\gamma,\delta}(s)=\mathfrak{M}_{-\delta, -\gamma, -\beta, -\alpha}(-s)$. Hence by the residue theorem, noticing that the only pole in the strip $-(1+\eps)\leq\tRe(s)\leq 1+\eps$ is at $s=0$ as we assume that the function $G(s)$ vanishes at $-\frac{(\alpha+\gamma)}{2}$ and $-\frac{(\beta+\delta)}{2}$,
 \est{
 \mathcal{M}^{**}_{\alpha, \beta, \gamma, \delta}(T)+\widetilde{\mathcal{M}}^{**}_{-\delta, -\gamma, -\beta, -\alpha}(T)& = \frac{\zeta(1 + \alpha - \beta)\zeta(1 +\gamma - \delta)}{\zeta(2 + \alpha - \beta + \gamma - \delta)} \\
&\qquad\quad\frac{1}{2\pi i}\int_\R\Phi \Big ( \frac{t}{T} \Big )(t/2\pi)^{-\beta-\delta}\mathfrak{M}_{\alpha,\beta,\gamma,\delta}(0)dt+O_\eps(T^\eps).
}

The other terms combine in the same way. Hence we are left to show that
\[
\frac{\zeta(1 + \alpha - \beta)\zeta(1 +\gamma - \delta)}{\zeta(2 + \alpha - \beta + \gamma - \delta)}\mathfrak{M}_{\alpha,\beta,\gamma,\delta}(0)=\sum_{g} \sum_{(a,b) = 1} \frac{\alpha_{ga} \overline{\beta_{gb}}}{g ab}Z_{\alpha,-\delta,\gamma,-\beta,a,b},
\]
which reduces to
\[
a^\beta b^\delta \eta_{\alpha,\beta,\gamma,\delta,a,b}(0,0,\beta+\delta)=B_{\alpha,-\delta,\gamma,-\beta,a}B_{\gamma,-\beta,\alpha,-\delta,b}.
\]
By symmetry and multiplicativity, this is equivalent to
\begin{equation}\label{verify}
p^{\nu\beta} \eta_{\alpha,\beta,\gamma,\delta,p^\nu}(0,0,\beta+\delta)=B_{\alpha,-\delta,\gamma,-\beta,p^\nu}.
\end{equation}

From Lemma 6.9 of \cite{HY} we have
\[
B_{\alpha,-\delta,\gamma,-\beta,p^\nu}=\Big(1-\frac{1}{p^{2+\alpha-\beta+\gamma-\delta}}\Big)^{-1}\frac{p^{-\beta}}{p^{-(\beta+\gamma)}-1}\Big(B^{(0)}-p^{-1}B^{(1)}+p^{-2}B^{(2)}\Big),
\]
where
\begin{align*}
&B^{(0)}=p^{-(\nu+1)\gamma}-p^{(\nu+1)\beta},\\
&B^{(1)}=(p^{-\alpha}+p^\delta)p^{\beta-\gamma}(p^{-\nu\gamma}-p^{\nu\beta}),\\
&B^{(2)}=p^{-\alpha+\beta-\gamma+\delta}(p^{\beta-\nu\gamma}-p^{\nu\beta-\gamma}).
\end{align*}
On the other hand, using the definition of $\eta_{\alpha,\beta,\gamma,\delta,a}(u,v,s)$ in~\eqref{etanew}, the left hand side in~\eqref{verify} is equal to
\begin{align*}
&p^{\nu\beta}\bigg(p^{-\nu(\beta+\gamma)}+\sum_{0\leq j<\nu}p^{-j(\beta+\gamma)}c_p(\alpha+\gamma,\gamma-\delta,\alpha-\beta+\gamma-\delta)\bigg)\\
&\qquad=p^{-\nu\gamma}+\frac{p^{-\nu\gamma}-p^{\nu\beta}}{p^{-(\beta+\gamma)}-1}\Big(1-\frac{1}{p^{1+\alpha+\gamma}}\Big)\Big(1-\frac{1}{p^{1+\gamma-\delta}}\Big)\Big(1-\frac{1}{p^{2+\alpha-\beta+\gamma-\delta}}\Big)^{-1}.
\end{align*}
So~\eqref{verify} is equivalent to
\begin{align*}
&p^{-\nu\gamma}(p^{-(\beta+\gamma)}-1)\big(1-p^{-2}p^{-\alpha+\beta-\gamma+\delta}\big)+(p^{-\nu\gamma}-p^{\nu\beta})\big(1-p^{-1}p^{-(\alpha+\gamma)}\big)\Big(1-p^{-1}p^{-\gamma+\delta}\big)\\
&\qquad\qquad=p^{-\beta}\Big(B^{(0)}-p^{-1}B^{(1)}+p^{-2}B^{(2)}\Big).
\end{align*}
It is an easy exercise to check that the above holds by comparing the coefficients of $p^0,p^{-1}$ and $p^{-2}$, and hence Theorem~\ref{mt} follows.

\subsection{Proof of Theorem~\ref{variantmt}}

In the remaining of the section, we shall show that
\begin{equation}\label{ZtildeZ}
Z_{\alpha,\beta,\gamma,\delta,a,b}=ab\widetilde{Z}_{\alpha,\beta,\gamma,\delta,a,b}(t)+O_\eps\big(T^{-(1-\vartheta)/2+\eps}\big)
\end{equation}
for $t\asymp T$, $a,b\leq T^\vartheta$ and $(a,b)=1$, and hence Theorem~\ref{mt} will imply Theorem~\ref{variantmt}.

From~\eqref{aeqac} we have
\[
\widetilde{Z}_{\alpha,\beta,\gamma,\delta,a,b}(t)=\frac1{2\pi i}\int_{(1)}\frac{G(s)}{s}\Big(\frac{t}{2\pi}\Big)^{2s}\sum_{am=bn}\frac{\sigma_{\alpha,\beta}(m)\sigma_{\gamma,\delta}(n)}{(ab)^{\frac12}(mn)^{\frac12+s}}\,ds.
\]
Since $(a,b)=1$ we get
\begin{equation}\label{tildeZ}
\widetilde{Z}_{\alpha,\beta,\gamma,\delta,a,b}(t)=\frac1{2\pi i}\int_{(1)}\frac{G(s)}{s}\Big(\frac{t}{2\pi}\Big)^{2s}(ab)^{-(1+s)}\sum_{n=1}^{\infty}\frac{\sigma_{\alpha,\beta}(bn)\sigma_{\gamma,\delta}(an)}{n^{1+2s}}\,ds.
\end{equation}
Let
\begin{align*}
A_{\alpha,\beta,\gamma,\delta}(s)&=\sum_{n=1}^{\infty}\frac{\sigma_{\alpha,\beta}(n)\sigma_{\gamma,\delta}(n)}{n^{1+2s}}\\
&=\frac{\zeta(1 + \alpha + \gamma+2s) \zeta(1 +\alpha + \delta+2s) \zeta(1 + \beta + \gamma+2s)\zeta(1+\beta+\delta+2s)}{\zeta(2 + \alpha + \beta + \gamma + \delta+4s)}
\end{align*}
and
\est{
B_{\alpha,\beta,\gamma,\delta,a}(s)=\prod_{p^\nu||a}\left(\frac{\sum_{j=0}^{\infty}\sigma_{\alpha,\beta}(p^j)\sigma_{\gamma,\delta}(p^{j+\nu})p^{-j(1+2s)}}{\sum_{j=0}^{\infty}\sigma_{\alpha,\beta}(p^j)\sigma_{\gamma,\delta}(p^{j})p^{-j(1+2s)}}\right),
}
so that $A_{\alpha,\beta,\gamma,\delta}=A_{\alpha,\beta,\gamma,\delta}(0)$, $B_{\alpha,\beta,\gamma,\delta,a}=B_{\alpha,\beta,\gamma,\delta,a}(0)$ and
\[
\sum_{n=1}^{\infty}\frac{\sigma_{\alpha,\beta}(bn)\sigma_{\gamma,\delta}(an)}{n^{1+2s}}=A_{\alpha,\beta,\gamma,\delta}(s)B_{\alpha,\beta,\gamma,\delta,a}(s)B_{\gamma,\delta,\alpha,\beta,b}(s)
\]
Moving the line of integration in~\eqref{tildeZ} to $\tRe(s)=-1/4+\eps$, we cross only a simple pole at $s=0$. The zeros of $G(s)$ at $-\frac{(\alpha+\gamma)}{2}$, etc. cancel out various poles of the zeta-functions. Bounding the new integral by absolute values we obtain
\[
\widetilde{Z}_{\alpha,\beta,\gamma,\delta,a,b}(t)=(ab)^{-1}A_{\alpha,\beta,\gamma,\delta}B_{\alpha,\beta,\gamma,\delta,a}B_{\gamma,\delta,\alpha,\beta,b}+O_\eps\Big(T^{-\frac12+\eps}(ab)^{-\frac34}\Big)
\]
and so~\eqref{ZtildeZ} follows.

\section{An unbalanced quadratic divisor problem}
As preparation for the proof of our quadratic divisor problem (Theorem~\ref{qdp}) we consider
the first an ``unbalanced'' divisor problem where the variables $m_1, m_2, n_1, n_2$ appearing in
$a m_1 m_2 - b n_1 n_2 = h$ are (essentially) subject to the condition that $m_1 < m_2$ and
$n_1 < n_2$. This assumption simplifies the decision on which variable to apply Poisson summation formula. 
In the proof of this result we appeal to our main technical ingredients: Watt's theorem and the Weil bound. 
\begin{proposition}\label{dsff}
Let $A,B,M_1,M_2,N_1,N_2,H\geq1$ and let $M=M_1M_2$, $N=N_1N_2$. Let $W_i$, for $i=0,1,\dots,4$, be smooth functions supported in $[1,2]$ such that $W_i^{(j)}\ll_j (ABMN)^{\eps}$ for any fixed $j\geq0$. Let $\alpha_a,\beta_b$ be sequences of complex numbers supported on $[A,2A]$ and $[B,2B]$, respectively, and such that $\alpha_a\ll A^\eps, \beta_b\ll B^\eps$. Let
\est{
\mathcal S_\pm &=\sum_{am_1m_2-bn_1n_2=\pm h\ne0}\alpha_a\beta_b W_0\Big(\frac hH\Big)W_1\Big(\frac{m_1}{M_1}\Big)W_2\Big(\frac{m_2}{M_2}\Big)W_3\Big(\frac{n_1}{N_1}\Big)W_4\Big(\frac{n_2}{N_2}\Big),
}
where the sum runs over positive integers $a,b,m_1,m_2,n_1,n_2$ and $h$. Assume that we have $M_1\leq M_2 (ABM N)^{\varepsilon}$, $N_1\leq N_2 (ABM N)^{\varepsilon}$ and $H\ll (AB)^{\frac12+\eps}$. Then
\est{
\mathcal S_\pm&= \mathcal M+\mathcal E ,
}
where
\est{
\mathcal M&=\sum_{\substack{a,b,m_1,n_1,h,d\\(am_1,bn_1)=d}}\alpha_a\beta_bW_0\Big(\frac{dh}{H}\Big)W_1\Big(\frac{m_1}{M_1}\Big)W_3\Big(\frac{n_1}{N_1}\Big)\int_{0}^\infty  W_2\Big(\frac{bn_1x}{dM_2}\Big)W_4\Big(\frac{am_1x}{dN_2}\Big)\,dx}
and
\begin{eqnarray*}
\mathcal E&\ll_\varepsilon  \pr{ ABMNH^2}^{\frac14+\eps}\pr{AB+H^\frac14(A+B)^\frac12(ABMN)^\frac18}.
\end{eqnarray*}
Moreover, without any assumption on $H$ the same result holds with the bound for $\mathcal E$ being replaced by
\es{\label{bwa}
\mathcal E&\ll_\varepsilon (ABMNH^2)^{\frac38+\eps} (ABH)^\frac14 (A+B)^\frac54+(ABMN)^{\eps}H^2.
}
\end{proposition}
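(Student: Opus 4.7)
Under the hypotheses $M_1 \leq M_2 (ABMN)^\eps$ and $N_1 \leq N_2 (ABMN)^\eps$ the long summation variables are $m_2$ and $n_2$, so the strategy is to freeze $(a,b,m_1,n_1,h)$, parameterize the integer solutions of $am_1 m_2 - bn_1 n_2 = \pm h$, Poisson-sum along the resulting one-parameter family, and bound the off-diagonal by combining Watt's theorem with the Weil bound in degenerate ranges.

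Setting $d = (am_1, bn_1)$, $A_1 = am_1/d$ and $B_1 = bn_1/d$ with $(A_1, B_1) = 1$, the equation is solvable only when $d \mid h$, and in that case the solutions form the arithmetic progression $(m_2, n_2) = (m_2^0 + B_1 t,\, n_2^0 + A_1 t)$, $t \in \Z$, with $m_2^0 \equiv \pm (h/d) \overline{A_1} \pmod{B_1}$. Applying Poisson summation to $\sum_t W_2((m_2^0 + B_1 t)/M_2) W_4((n_2^0 + A_1 t)/N_2)$, the zero-frequency term becomes, after a change of variables and the reindexing $h \mapsto dh$, an integral matching $\mathcal M$ up to a shift of size $h/A_1$ inside $W_2$. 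When $H \ll (AB)^{1/2+\eps}$ this shift is admissible and can be absorbed into the error, whereas without that restriction the absorption only works on subranges, leaving the additional $(ABMN)^\eps H^2$ term present in~\eqref{bwa}.

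Each non-zero Poisson frequency $k$, effectively cut off to $|k| \ll K := (B_1/M_2 + A_1/N_2)(ABMN)^\eps$ by integration by parts, carries a phase
\est{
e\!\left(\mp \frac{k(h/d)\,\overline{bn_1/d}}{am_1/d}\right).
}
Summing with the arbitrary weights $\alpha_a \beta_b$ over $a, b$ transforms this object into a weighted average of Kloosterman-type fractions of modulus $am_1/d$ and length controlled by $K$, precisely the family that Watt's theorem \cite{Watt} (via the Kuznetsov formula and the spectral large sieve on $GL(2)$) estimates nontrivially. Invoking Watt in the generic ranges, and reverting to the Weil bound only where Watt degenerates (for instance when the modulus $am_1/d$ becomes too small compared to the length of the $k$-sum), yields the first stated bound for $\mathcal E$ after summation over $m_1, n_1, h, d, k$.

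The \emph{main obstacle} is the off-diagonal step: one must reshape the Poisson dual precisely so that Watt's theorem can be applied with optimal uniformity in all of $A, B, M_1, M_2, N_1, N_2, H$, and identify and handle the ranges in which Watt's bound is overtaken by Weil. This spectral improvement over the Weil bound, used throughout by Hughes--Young, is exactly the input that lifts the admissible exponent from $\vartheta < 1/11$ to $\vartheta < 1/4$.
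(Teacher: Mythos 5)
Your outline identifies the correct strategy at a high level — freeze $(a,b,m_1,n_1,h)$, parametrize solutions of $am_1m_2-bn_1n_2=\pm h$ using $d=(am_1,bn_1)$, Poisson-sum in the dual parameter, isolate the zero frequency as the main term, and bound the non-zero frequencies by a combination of Watt's theorem and the Weil bound. This matches the skeleton of the paper's proof. However, there are genuine gaps and one structural error.

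\textbf{Placement of the shift and the $H^2$ error.} You claim the zero-frequency term gives $\mathcal M$ up to a shift of size $h/A_1$ \emph{inside} $W_2$, absorbed after Poisson and responsible for the $(ABMN)^\eps H^2$ term in~\eqref{bwa}. In fact, the paper removes the shift \emph{before} Poisson: eliminating $n_2=(am_1m_2\mp h)/(bn_1)$ replaces $W_4\bigl(\tfrac{am_1m_2\mp h}{bn_1N_2}\bigr)$ by $W_4\bigl(\tfrac{am_1m_2}{bn_1N_2}\bigr)\bigl(1+O(H(AM)^{-1+\eps})\bigr)$, and the contribution of this relative error is $O_\eps(H^2(AM)^\eps)$, appearing in \emph{both} error bounds (it is simply dominated by the other terms when $H\ll(AB)^{1/2+\eps}$). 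So the $H^2$ term is not a failure of absorption on a subrange; it is always present, and the restriction $H\ll(AB)^{1/2+\eps}$ is what is needed for the Watt input (and for this error to be negligible relative to the main error). Also the shift lives in $W_4$, not $W_2$.

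\textbf{Choice of Poisson variable and the resulting modulus.} After reducing by symmetry to $BN_1\leq AM_1$, the paper Poisson-sums the variable $m_2$, whose arithmetic progression has the \emph{smaller} conductor $bn_1/d$; the resulting phase is $e\bigl(\mp lh\,\overline{am_1/d}\,/\,(bn_1/d)\bigr)$, and the Weil bound is then applied to the $m_1$-sum (a Kloosterman sum in $m_1$ modulo $bn_1/d$). Your phase $e\bigl(\mp k(h/d)\overline{bn_1/d}\,/\,(am_1/d)\bigr)$ has the modulus and inverse interchanged. This is equivalent to the paper's phase by reciprocity, but it is the less convenient form: $m_1$ now appears in the modulus rather than in the inverse, so the subsequent Weil step does not apply to the $m_1$-sum and the Watt setup would need to be re-identified. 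You should fix the Poisson variable consistently with the assumption $BN_1\leq AM_1$.

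\textbf{The central estimate is missing.} The substantive content of the proposition is the estimation of the dual sum $Z_{\pm,d}(x)$, which occupies Lemma~\ref{bfz} and its proof: (i) replacing $(m_1,bn_1/d)$-coprimality correctly after pulling out $(a,d_2)=1$, $(b,d_4)=1$; (ii) the Weil bound for the $m_1$-sum, yielding~\eqref{fbfz}; (iii) the application of Watt's theorem (Lemma~\ref{Watt}) with the precise identifications $R\leftrightarrow A/d_1$, $S\leftrightarrow B/d_3$, $V\leftrightarrow M_1/d_2$, $P\leftrightarrow N_1/d_4$, $H\leftrightarrow H/d$, $C\leftrightarrow L$, verification of the admissibility constraints (which is what forces $d\ll(AB)^{1/2}(AM)^{-100\eps}$ and requires $H\ll(AB)^{1/2+\eps}$), yielding~\eqref{sbfz}; and (iv) the splitting of the $d$-sum at $\min\{(AB)^{1/2}(AM)^{-100\eps},(AM_1N_1H/BM_2)^{1/3}\}$ and the subsequent integration over $x$ and summation over $d$. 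None of this computation is present in your proposal, nor a surrogate for it. Since the precise exponents in the bound for $\mathcal E$ emerge only from that computation, the proposal, as written, does not establish the error term of the proposition and is a plan rather than a proof.

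The preliminary reductions $AM\asymp BN$ and $MN\gg(AB)^\delta$, and the truncation of the dual sum at $L=\frac{AM}{dM_2N_2}(AM)^\eps$, should also be stated explicitly, as several of the subsequent inequalities in the error analysis use them repeatedly.
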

\begin{proof}
First, we observe that we can assume there is $\delta>0$ such that $MN\gg (AB)^{\delta}$ and, for~\eqref{bwa}, $H\ll (ABMN)^{\frac12-\delta}$ since otherwise the bound is trivial, and that $AM\asymp BN$ (otherwise the sum is empty when $AMBNH$ is large enough). 
Moreover, by symmetry we can assume $BN_1\leq AM_1$. To summarize, we have
\es{\label{hypp}
AM\asymp BN,\qquad   MN\gg (AB)^{\delta}\qquad\textrm{and}\qquad BN_1\leq AM_1.
}

Now, let $d=(am_1,bn_1)$ (note that this implies $d|h$). We can eliminate the variable $n_2$ by writing $am_1m_2-bn_1n_2=\pm h$ as $m_2\equiv \pm(h/d)\overline{am_1/d}\mod{bn_1/d}$:
\begin{eqnarray*}
&&\sum_{\substack{m_2,n_2\\am_1m_2-bn_1n_2=\pm h}}W_2\Big(\frac{m_2}{M_2}\Big)W_4\Big(\frac{n_2}{N_2}\Big)\nonumber\\
&&\qquad=\sum_{m_2\equiv \pm(h/d)\overline{am_1/d}\mod{ bn_1/d}}W_2\Big(\frac{m_2}{M_2}\Big)W_4\Big(\frac{am_1m_2\mp h}{bn_1N_2}\Big)\\
&&\qquad=\sum_{m_2\equiv \pm(h/d)\overline{am_1/d}\mod{ bn_1/d}}W_2\Big(\frac{m_2}{M_2}\Big)W_4\Big(\frac{am_1m_2}{bn_1N_2}\Big)\Big(1+O_\varepsilon \big(H(AM)^{-1+\varepsilon}\big)\Big).\nonumber
\end{eqnarray*}
The contribution of the error term to $S_\pm$ is bounded by 
\est{
H(AM)^{-1+\varepsilon}\sum_{h\asymp H}\sum_{\substack{am_1m_2-bn_1n_2=\pm h,\\am_1m_2\asymp bn_1n_2 \asymp AM}}(AM)^\eps \ll_\varepsilon H^2 (AM)^\eps,
}
and, thus, after applying Poisson's summation and changing $h$ into $dh$, we get
\es{\label{asd}
\mathcal S_\pm &=\sum_{d\leq 2H}\sum_{\substack{a,b,m_1,n_1,h\\(am_1,bn_1)=d}}\sum_{l\in\Z}\alpha_a\beta_bW_0\Big(\frac {dh}H\Big)W_1\Big(\frac{m_1}{M_1}\Big)W_3\Big(\frac{n_1}{N_1}\Big)\\
&\qquad\qquad \textrm{e}\bigg(\mp lh\frac{\overline{am_1/d}}{bn_1/d}\bigg)F(a,b,m_1,n_1,d,l)+O_\varepsilon\pr{H^2(AM)^\eps},
}
where
\est{
F(a,b,m_1,n_1,d,l)&=\frac{d^2}{abm_1n_1}\int_{0}^\infty  W_2\Big(\frac{xd}{am_1M_2}\Big)W_4\Big(\frac{xd}{bn_1N_2}\Big)\e{\frac{d^2lx}{abm_1n_1}}\,dx\\
&=\int_{0}^\infty W_2\Big(\frac{bn_1x}{dM_2}\Big)W_4\Big(\frac{am_1x}{dN_2}\Big)\e{lx}\,dx.\\
}

The term $l=0$ corresponds to the main term (notice that the sum over $d$ can be
extended to an infinite sum since $W_0(\cdot)$ is compactly supported in $[1,2]$). 
For the terms with $l\ne0$, integration by parts implies
\begin{eqnarray*}
F(a,b,m_1,n_1,d,l)&\ll_\varepsilon&  (AM)^{\eps}\frac1{l^j} \Big(\frac{bn_1}{dM_2}+\frac{am_1}{dN_2}\Big)^j\frac{dM_2}{bn_1}\\
&\ll_\varepsilon&(AM)^{\eps} \Big(\frac{AM}{dlM_2N_2}\Big)^j\frac{dM_2}{BN_1}
\end{eqnarray*}
for any fixed $j\geq0$. Hence we can restrict the sum in~\eqref{asd} to $0<|l|\leq L$, where 
\est{
L=\frac{AM}{dM_2N_2}(AM)^\eps.
}
Thus, we have
\est{
\mathcal S_\pm=\mathcal M+\mathcal R_\pm+O_\varepsilon\pr{H^2(AM)^\eps},
}
where
\begin{eqnarray*}
\mathcal R_\pm&=&\sum_{d\leq 2H} \sum_{\substack{a,b,m_1,n_1,h\\(am_1,bn_1)=d}}\sum_{0<|l|\leq L}\alpha_a\beta_bW_0\Big(\frac {dh}H\Big) W_1\Big(\frac{m_1}{M_1}\Big)W_3\Big(\frac{n_1}{N_1}\Big) \\
&&\qquad\qquad \textrm{e}\bigg(\mp lh\frac{\overline{am_1/d}}{bn_1/d}\bigg)F(a,b,m_1,n_1,l,d).\\
\end{eqnarray*}

From the definition of $F$, we have
\est{
\mathcal R_\pm\ll \sum_{d\leq 2H} \int_{x\asymp \frac{dM_2}{BN_1}\asymp \frac{dN_2}{AM_1}}|Z_{\pm,d}(x)|\,dx,
}
where
\est{
Z_{\pm,d}(x)&=\sum_{\substack{a,b,m_1,n_1,h\\(am_1,bn_1)=d}}\sum_{0<|l|\leq L}\alpha_a\beta_bW_0\Big(\frac {dh}H\Big)W_1\Big(\frac{m_1}{M_1}\Big)W_3\Big(\frac{n_1}{N_1}\Big)\\
&\qquad\qquad W_2\Big(\frac{bn_1x}{dM_2}\Big)W_4\Big(\frac{am_1x}{dN_2}\Big)  \textrm{e}\bigg(\mp lh\frac{\overline{am_1/d}}{bn_1/d}\bigg)\e{lx}.
}
We can bound $Z_{\pm,d}$ using the following lemma which we will prove in the next subsection.

\begin{lemma}\label{bfz}
Under the conditions of Proposition~\ref{dsff} (without the condition $H\ll (AB)^{\frac12+\eps}$), the assumptions~\eqref{hypp} and $x\asymp \frac {dN_2}{AM_1}$, we have
\es{\label{fbfz}
Z_{\pm,d}(x)&\ll_\varepsilon 
(AM)^\eps\frac{A^{\frac32}B^\frac12 H}{d^{\frac72}}\Big(\frac{M_1 N_1}{M_2N_2}\Big)^\frac12(BN_1)^\frac12 \Big(BN_1+M_1 \min \big\{A,H\big\}\Big).\\
}
Moreover, if $H\ll (AB)^{\frac12+\eps}$ and $d\ll (AB)^{\frac{1}{2}}(AM)^{-100\eps}$, then
\es{\label{sbfz}
Z_{\pm,d}(x)&\ll _\varepsilon
(AM)^{\eps}\frac{A^2BH^\frac12}{d^2}  \Big(\frac{M_1N_1}{M_2N_2}\Big)^{\frac12 } (BM)^\frac12 \Big(1+\frac{N_1^2H}{A^3B^2}\Big)^\frac14.\\
}
\end{lemma}
We first assume that $H\ll (AB)^{\frac12+\eps}$. We apply~\eqref{fbfz} to the terms with $d>\min \Big\{(AB)^{\frac{1}{2}}(AM)^{-100\eps},\big(\frac{AM_1N_1H}{BM_2}\big)^\frac13\Big\}$. We integrate over $x \asymp d N_2 / (A M_1)$ and then use the inequality $\sum_{d > \min(z,w)} d^{-\frac52} \ll z^{-\frac32} + w^{-\frac32}$ getting that the contribution of these terms to $\mathcal R_\pm$ is
\est{
&\ll_\varepsilon (AM)^\eps\frac{(ABN)^\frac12 H}{M^\frac12}(BN_1)^\frac12\Big(BN_1+M_1\min \big\{A,H\big\}\Big)\\
&\qquad\qquad\qquad\qquad\qquad\qquad\qquad\qquad\bigg(\Big(\frac{BM_2}{AM_1N_1H}\Big)^\frac12+\frac1{\pr{AB}^\frac34}\bigg)
}
We think of the above expression as being of the form $(I + II)(a + b)$, expanding it as $I\cdot a + I\cdot b + II\cdot a + II \cdot b$ we use the inequality $\min(A, H) \leq A$ and $\min(A,H) \leq H$ in the terms $II \cdot a$ and $II\cdot b$ respectively, getting, 
\est{
& \ll_\varepsilon  (AM)^\eps\Big(
 \frac{B^\frac52N^\frac12N_1H^\frac12}{M_1} 
+\frac{ B^{\frac54}N^\frac12N_1^\frac32H}{A^\frac14M^\frac12} 
+AB^\frac32N^\frac12 H^\frac12 
+\frac{B^\frac14 M_1^\frac12(NN_1)^\frac12  H^2}{A^\frac14M_2^\frac12} 
\Big)
}
Subsequently in the first term we use $B N_1 \leq A M_1$, in the second term we use $A M \asymp (B N)$ and in the fourth term $H \ll (A B)^{\frac12 + \varepsilon}$ together with $M_1 \ll M_2 (A M)^{\varepsilon}$, 
\est{
&\ll_\varepsilon (AM)^\eps \Big(
AB^\frac{3}{2}N^\frac12H^\frac12 
+A^\frac14 B^\frac34 N_1^\frac32H 
+A^\frac14B^\frac34 (NN_1)^\frac12H
\Big)}
Finally using the inequalities $ AM\asymp BN$, 
$N_1\ll N_2(AM)^\eps$ and $H\ll (AB)^{\frac12+\eps}$ we conclude with the bound
\est{
&\ll_\varepsilon  \pr{ BNH}^{\frac12+\eps}\pr{AB+A^\frac14B^\frac14N_1^\frac12H^\frac12}
\ll_\varepsilon  \pr{ BNH}^{\frac12+\eps}\pr{AB+A^\frac14(BN)^\frac14H^\frac12}\\
&\ll_\varepsilon  \pr{ ABMNH^2}^{\frac14+\eps}\pr{AB+H^\frac14A^\frac38B^\frac18(ABMN)^\frac18}.
}
For the other values of $d$ we apply~\eqref{sbfz}. The integration over $x$ contributes $d M_2 / (B N_1)$, while the sum over $d$ is bounded using $\sum_{d \leq (A B)^{\frac12}} d^{-1} \ll_\eps (A B)^{\varepsilon}$. Thus the contribution of these terms to $\mathcal R_\pm$ is
\est{
&\ll_\varepsilon(AM)^{\eps}\frac{AM_2}{BN_1}ABH^\frac12  \Big(\frac{M_1N_1}{M_2N_2}\Big)^{\frac12 }(BM)^\frac12 \Big(1+\frac{N_1^2H}{A^3B^2}\Big)^\frac14 
}
Repeatedly using that $A M \asymp B N$ we see that the above is 
\est{
&\ll_\varepsilon ABH^\frac12  \pr{AM}^{\frac12 +\eps}
+ A^\frac14B^\frac12 H^\frac34 N_1^\frac12  \pr{AM}^{\frac12 +\eps } \\
&\ll_\varepsilon  \pr{AMH}^{\frac12 +\eps}\Big(AB 
+ (ABH)^\frac14(ABMN)^\frac18 
\Big),\\
}
and so Proposition~\ref{dsff} follows in the case $H\ll (AB)^{\frac12+\eps}$. 

Without the assumption $H\ll (AB)^{\frac12+\eps}$ we apply~\eqref{fbfz} for all $d$, integrating over $x \asymp d N_2 / (A M_1)$ and obtain
\est{
\mathcal R_\pm 
&\ll_\varepsilon  
(AM)^\eps A^{\frac12}B^\frac12 \Big(\frac{ N}{M}\Big)^\frac12H(BN_1)^\frac12\big(BN_1+AM_1\big)\\
&\ll_\varepsilon  
(AM)^\eps AH (ABM_1N_1)^\frac12(BN_1+AM_1)^\frac12, 
\\
}
since $ AM\asymp BN$ and $BN_1\leq AM_1$. Finally, since $M_1\ll M_2(AM)^\eps$ and $ AM\asymp BN$,  we have $AM_1\ll A^{\frac12}(AM)^{\frac12}(AM)^\eps\asymp A^{\frac12}(ABMN)^{\frac14}(AM)^\eps$ and similarly for $BN_1$, thus
\est{
\mathcal R_\pm &\ll_\varepsilon  
(AM)^\eps AH(AB)^\frac14(A+B)^\frac14 (ABMN)^\frac{3}8.
\\
}
This is stronger than~\eqref{bwa}, so the proof of Proposition~\ref{dsff} is concluded.
\end{proof} 

\subsection{Proof of Lemma~\ref{bfz}}
\subsubsection{Proof of~\eqref{fbfz}}
First, observe that we have
\est{
Z_{\pm,d}(x)&=\sum_{d_1d_2=d}\sum_{\substack{a,b,m_1,n_1,h\\ (a,d_2)=1,\,d|bn_1\\(am_1,bn_1/d)=1}}\sum_{0<|l|\leq L}\alpha_{d_1a}\beta_bW_0\Big(\frac {dh}H\Big) W_1\Big(\frac{d_2m_1}{M_1}\Big)W_3\Big(\frac{n_1}{N_1}\Big)\\
&\qquad\qquad W_2\Big(\frac{bn_1x}{dM_2}\Big)W_4\Big(\frac{am_1x}{N_2}\Big)  \textrm{e}\bigg(\mp lh\frac{\overline{am_1}}{bn_1/d}\bigg)\e{lx}.
}
By Weil's bound and partial summation, for $a\asymp A/d_1$ and $x\asymp \frac {dN_2}{AM_1}$, we have
\begin{eqnarray*}
&&\sum_{\substack{m_1\\(m_1,bn_1/d)=1}} W_1\Big(\frac{d_2m_1}{M_1}\Big)W_4\Big(\frac{am_1x}{N_2}\Big)   \textrm{e}\bigg(\mp lh\frac{\overline{am_1}}{bn_1/d}\bigg)\\
&&\qquad\qquad\qquad\qquad\ll_\eps (AM)^\eps(lh,bn_1/d)^\frac12(bn_1/d)^{\frac12+\eps}\Big(1+\frac{d_1M_1}{bn_1}\Big),
\end{eqnarray*}
and thus
\est{
Z_{\pm,d}(x)&\ll_\eps (AM)^\eps\sum_{d_1d_2=d}\sum_{\substack{alh\ll ALH/d\\ bn_1\ll BN_1,\, d|bn_1}}(lh,bn_1/d)^\frac12(bn_1/d)^{\frac12+\eps}\Big(1+\frac{d_1M_1}{bn_1}\Big)\\
&\ll_\eps (AM)^\eps\frac{ALH}{d^\frac32}\sum_{d_1d_2=d}\sum_{\substack{bn_1\ll BN_1\\ d|bn_1}}(bn_1)^{\frac12+\eps}\Big(1+\frac{d_1M_1}{bn_1}\Big)\\
&\ll_\eps (AM)^\eps\frac{ALH}{d^\frac52}\sum_{d_1d_2=d}\Big((BN_1)^\frac32+d_1M_1(BN_1)^\frac12\Big)\\
&\ll_\eps (AM)^\eps\frac{ALH}{d^\frac52}(BN_1)^{\frac12}\Big(BN_1+M_1\min\big\{A,H\big\}\Big),
}
since $d_1\ll A$, $d_1\leq d\ll H$.

\subsubsection{Proof of~\eqref{sbfz}}
To prove~\eqref{sbfz} we need Watt's bound in the form given by~\cite{BCR}.
\begin{lemma}\label{Watt}
Let $H,C,R,S, V,P\geq1$ and $\delta\leq1$. Assume that  
\est{
X&=\Big(\frac {RSVP}{HC}\Big)^\frac12\gg (RSVP)^{\varepsilon},\qquad
(RS)^2\geq\max\Big\{H^2C,\frac{SP}{V}(RSVP)^{\varepsilon}\Big\}.
}
Moreover, assume that $\alpha(y), \beta(y)$ are complex valued smooth functions, supported on the intervals $[1,H]$ and $[1,C]$, respectively, such that
\est{
\alpha^{(j)}(x),\beta^{(j)}(x)\ll_j (\delta x)^{-j}
}
for any $j\geq0$. Assume $a_r,b_s$ are sequences of complex numbers supported on $[R,2R]$, $[S,2S]$, respectively, and such that $a_r\ll R^\varepsilon$, $b_s\ll S^\varepsilon$. Finally, assume that 
\est{
\frac{\partial^{i+j}}{\partial x^i\partial y^j}\gamma_{r,s}(x,y)\ll_{i,j} x^{-i}y^{-j}
}
for any $i,j\geq 0$, and $\gamma_{r,s}(x,y)$ is supported on $[V,2V]\times[P,2P]$ for all $r$ and $s$.
Then 
\est{
&\sum_{\substack{h,c,r,s,v,p\\(rv,sp)=1}}\alpha(h)\beta(c)\gamma_{r,s}(v,p)a_{r}b_{s} \textrm{e}\bigg(\pm\frac{hc\overline {rv}}{sp}\bigg)\\
&\ \ll \delta^{-\frac 72} HC  R(V+SX)\Big(1+\frac{HC}{RS}\Big)^\frac12\Big(1+\frac{P}{RV} \Big)^\frac12\Big(1+\frac{H^2CPX^2}{R^4S^3V}\Big)^\frac14 (HCRSVP)^{\varepsilon}.
}
\end{lemma}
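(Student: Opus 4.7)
The plan is to establish this bound by following Watt's original approach as formulated by Bettin–Chandee–Radziwi\l\l, with spectral theory on $GL(2)$ as the key analytic input. The essential task is to convert the Kloosterman fractions $\e{hc\,\overline{rv}/(sp)}$ into sums of Kloosterman sums of bounded modulus, to which the Kuznetsov trace formula and the Deshouillers–Iwaniec spectral large sieve can be applied.

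First, I would use Mellin inversion to separate the smooth weights $\alpha(h)$, $\beta(c)$, $\gamma_{r,s}(v,p)$ into pure monomials, at the cost of integrals over vertical lines with rapid decay coming from the derivative hypotheses; the factor $\delta^{-7/2}$ in the final bound reflects the cost of this separation when $\delta<1$, while any contour factors are absorbed in $(HCRSVP)^{\varepsilon}$. The coprimality condition $(rv,sp)=1$ is disentangled by Möbius inversion, introducing a small divisor variable whose overall contribution is logarithmic.

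The central manoeuvre is reciprocity: writing
\[
\frac{hc\,\overline{rv}}{sp}\equiv -\frac{hc\,\overline{sp}}{rv}+\frac{hc}{rv\,sp}\pmod 1,
\]
one swaps the modulus of the Kloosterman fraction from $sp$ to $rv$ (the natural direction given the size hypothesis $(RS)^2\geq H^2C$), at the cost of an extra phase whose size is controlled by the assumption $X\gg (RSVP)^{\varepsilon}$. Applying Poisson summation in the $c$-variable then turns the resulting additive character into genuine Kloosterman sums $S(m,n;rv)$, weighted by the Fourier transform of $\beta$, with $m,n$ lying in ranges dictated by the other parameters.

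At this point the Kuznetsov trace formula converts the averages of $S(m,n;rv)$ into a spectral sum over Fourier coefficients of Maass cusp forms, holomorphic cusp forms, and Eisenstein series. Pulling out the $r$-sum by Cauchy–Schwarz in the $(r,s)$-variables, one applies the Deshouillers–Iwaniec spectral large sieve to the inner sum of Fourier coefficients. The main obstacle is the careful balancing of the Cauchy–Schwarz step and the large sieve so that the result factorises into the four shape factors $R(V+SX)$, $(1+HC/RS)^{1/2}$, $(1+P/(RV))^{1/2}$, and the final $(1+H^2CPX^2/(R^4S^3V))^{1/4}$; the last factor, with its exponent $1/4$, reflects the contribution of the exceptional spectrum and requires the sharp form of the Deshouillers–Iwaniec inequality rather than a naive estimate. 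Keeping track of all parameter regimes through reciprocity, Poisson summation, and the spectral decomposition is where the bulk of the technical work lies.
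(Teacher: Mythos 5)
The paper does not prove this lemma at all: it is imported verbatim from Bettin--Chandee--Radziwi\l\l\ \cite{BCR}, which in turn is a reworking of Watt \cite{Watt} built on the Deshouillers--Iwaniec spectral large sieve \cite{DI2}. Your sketch is therefore not being compared against an argument in this paper but against the cited one, and at the level of ``overall roadmap'' (separation of the smooth weights, reduction from Kloosterman fractions to Kloosterman sums, Kuznetsov, and the Deshouillers--Iwaniec large sieve with the exceptional spectrum producing the $1/4$-power factor) your description of that roadmap is essentially right.

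There is, however, a concrete error in the middle step. After the reciprocity move
\[
\frac{hc\,\overline{rv}}{sp}\equiv -\frac{hc\,\overline{sp}}{rv}+\frac{hc}{rv\,sp}\pmod 1,
\]
you claim that Poisson summation in $c$ ``turns the resulting additive character into genuine Kloosterman sums $S(m,n;rv)$.'' It does not: the variable $c$ appears linearly in the phase $-hc\,\overline{sp}/(rv)$, so splitting $c$ into residue classes modulo $rv$ and applying Poisson in the long variable merely detects a \emph{linear} congruence $m\equiv h\,\overline{sp}\pmod{rv}$ and produces a Ramanujan-type sum, not a Kloosterman sum. To produce $S(\cdot,\cdot\,;rv)$ you must Poisson-sum a variable that sits \emph{inside} the modular inverse, i.e.\ $s$ or $p$ (or, without reciprocity, $v$ to get $S(\cdot,\cdot\,;sp)$). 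Identifying which variable to dualize is exactly where the hypotheses $(RS)^2\geq H^2C$ and $(RS)^2\geq (SP/V)(RSVP)^{\varepsilon}$ enter: they ensure that after this Poisson step the dual variable has a short range and the moduli entering the large sieve have the admissible size. As written, the step you propose would fail, and the downstream Kuznetsov / large-sieve analysis would have no Kloosterman sums to act on. You should also be a little more circumspect about attributing $\delta^{-7/2}$ purely to Mellin separation; in \cite{BCR} this factor tracks through the smoothing and partial-summation lemmas applied to $\alpha$, $\beta$ and $\gamma_{r,s}$, and its precise origin is tied up with which variables are dualized and smoothed rather than with a single Mellin inversion.
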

In order to apply Lemma~\ref{Watt}, we write $Z_{\pm,d}(x)$ as
\est{
Z_{\pm,d}(x)=&\sum_{d_1d_2=d_3d_4=d}\sum_{\substack{a,b,m_1,n_1,h\\(am_1,bn_1)=1\\ (a,d_2)=(b,d_4)=1}}\sum_{0<|l|\leq L}\alpha_{d_1a}\beta_{d_3b}W_0\Big(\frac {dh}H\Big)W_1\Big(\frac{d_2m_1}{M_1}\Big)W_3\Big(\frac{d_4n_1}{N_1}\Big)\\
&\qquad\qquad W_2\Big(\frac{bn_1x}{M_2}\Big)W_4\Big(\frac{am_1x}{N_2}\Big) \textrm{e}\bigg(\mp lh\frac{\overline{am_1}}{bn_1}\bigg)\e{lx}.
}
Thus, we use Proposition~\ref{Watt} with
\est{
&H\leftrightarrow \frac Hd, \quad C\leftrightarrow L= \Big(\frac{ABM_1N_1}{d^2M_2N_2}\Big)^{\frac12 }(AM)^\eps\\
&R\leftrightarrow \frac A{d_1},\quad S\leftrightarrow \frac{B}{d_3},\quad V\leftrightarrow \frac{M_1}{d_2},\quad P\leftrightarrow \frac{N_1}{d_4}, 
}
and 
\[
X\leftrightarrow \Big(\frac{ABMN}{H^2}\Big)^{\frac14}
,\qquad \delta\leftrightarrow(AM)^{-\eps},
\]
since $Lx\ll (ABMN)^\eps$.  The conditions required by Lemma~\ref{Watt} are $\frac{ABMN}{H^2}\gg (ABM_1N_1)^{\varepsilon}$, which is satisfied since $H\ll (AB)^{\frac12+\eps}$ and $MN \gg (AB)^\delta$, and 
\begin{equation*}
\quad (AB)^2\geq\max\bigg\{\frac{(d_1d_3)^2H^2(ABMN)^{\frac12}}{d^3M_2N_2},\frac{d_1d_{3}^{2}BN_1}{M_1}(AM)^\varepsilon\bigg\}.
\end{equation*}
Since $M_1\leq M_2(AM)^\varepsilon,$ $N_1\leq N_2(AM)^\varepsilon,$ $BN_1\leq AM_1$, $d_1\ll A,d_3\ll B$ and $H\ll (AB)^{\frac12+\eps}$, this condition is satisfied if $d\ll (AB)^\frac12(AM)^{-100\eps}$. Thus, under this condition we have
\est{
Z_{\pm,d}(x)&\ll (AM)^{\eps}\frac{ALH}{dd_1}  \bigg(\frac{M_1}{d_2}+\frac B{d_3}\Big(\frac{BN}{H}\Big)^{\frac12}\bigg)\bigg(1+\frac{H}{d_2d_4(AB)^\frac12}\Big(\frac{M_1N_1}{M_2N_2}\Big)^\frac12\bigg)^\frac12\\
&\qquad\qquad\qquad\qquad\qquad\qquad\Big(1+\frac{d_3N_1}{AM_1}  \Big)^\frac12\Big(1+\frac{d_1^3d_3^4N_1^2H}{d^3A^3B^2}\Big)^\frac14 \\
&\ll (AM)^{\eps}\frac{ALH}{dd_1}  \bigg(\frac{M_1}{d_2}+\frac B{d_3}\Big(\frac{BN}{H}\Big)^{\frac12} \bigg)\Big(1+\frac{d_1^3d_3^4N_1^2H}{d^3A^3B^2}\Big)^\frac14\\
&\ll (AM)^{\eps}\frac{ALH}{d}  \bigg(M_1+ B\Big(\frac{BN}{H}\Big)^{\frac12} \bigg)\Big(1+\frac{N_1^2H}{A^3B^2}\Big)^\frac14 \\
&\ll (AM)^{\eps}\frac{ABLH}{d}   \Big(\frac{BN}{H}\Big)^{\frac12}\Big(1+\frac{N_1^2H}{A^3B^2}\Big)^\frac14,\\
}
since $M_1\leq M_2(AM)^{\eps}$, $N_1\leq N_2(AM)^{\eps}$, $BN_1\leq AM_1$, $AM \asymp BN$ and $H\ll (AB)^{\frac12+\eps}$. This concludes the proof of the lemma.

\section{The quadratic divisor problem main term}

In this section we establish the quadratic divisor problem. This amounts to using Proposition~\ref{dsff} and to a careful analysis of the main term. We will first prove a rougher result and then deduce the slightly more flexible version stated in the introduction.

\begin{theorem}\label{qdp}
Let $A,B,H,X,T\geq1$ with  $\log (ABHX) \ll \log T$. Let $\alpha_a,\beta_b$ be sequences of complex numbers supported on $[A,2A]$ and $[B,2B]$, respectively, and such that $\alpha_a\ll A^\eps, \beta_b\ll B^\eps$. 
Let $f\in\mathcal C^{\infty}(\R_{\geq0}^{3})$ be such that
\es{\label{asof}
\frac{\partial ^{i+j+k}}{\partial x^i\partial y^j\partial z^k}f(x,y,z)\ll_{i,j,k} T^{\eps}(1+x)^{-i}(1+y)^{-j}(1+z)^{-k}
}
for any $i,j,k\geq0$. Moreover, assume $f(x,y,z)$ is supported on $[H,2H]$ as a function of $z$ for all $x,y$. Finally, let $K\in\mathcal C^\infty(\R_{\geq0})$ be such that $K^{(j)}(x)\ll_{j,r} T^\eps (1+x)^{-j}(1+x/X^2)^{-r}$ for any $j,r\geq0$. Then, writing
\est{
\mathcal{S} = 
\sum_{a m_1 m_2 - b n_1 n_2 = h > 0}\frac{ \alpha_a  \beta_b}{ m_1^{\alpha} m_2^{\beta} n_1^{\gamma} n_2^{\delta}} f(am_1 m_2, b n_1 n_2, h)K(m_1m_2n_1n_2), 
}
where the sum runs over positive integers $a,b,m_1,m_2,n_1,n_2$ and $h$, 
we have
\es{\label{mtf}
\mathcal{S} = \mathcal{M}_{\alpha,\beta,\gamma,\delta} + \mathcal{M}_{\beta,\alpha,\gamma,\delta}
+ \mathcal{M}_{\alpha,\beta,\delta,\gamma} + \mathcal{M}_{\beta,\alpha,\delta,\gamma} + \mathcal E,
}
where 
\begin{align*}
 \mathcal{M}_{\alpha,\beta,\gamma,\delta}=& \frac{\zeta(1 + \alpha -\beta)\zeta(1 + \gamma-\delta)}{\zeta(2 + \alpha - \beta + \gamma- \delta)}\sum_{g}
\sum_{(a,b) = 1} \frac{\alpha_{ga} {\beta_{gb}}g}{(g a )^{1 -\beta} (g b)^{1-\delta }} 
 \\ &\qquad\qquad \int_{0}^{\infty} K\Big(\frac{x^2}{g^2ab}\Big)\widetilde{f}_{ \alpha, \beta, \gamma, \delta}(x, x; a,b,g) x^{-\beta-\delta } dx,
\end{align*}
with $\widetilde{f}_{ \alpha, \beta, \gamma, \delta}$ as in~\eqref{dfntildef}, and $\mathcal E$ is bounded by
\es{\label{mb}
\mathcal E \ll  T^\eps(A B X^2 H^2)^{\frac14} \big ( AB + H^{\frac14}(A + B)^{\frac12} 
(ABX^2)^{\frac18} \big )
}
if $H \ll (A B)^{\frac12 + \varepsilon}$, and by 
\es{\label{wb}
\mathcal E&\ll  T^\eps(ABX^2H^2)^\frac38(ABH)^\frac14(A+B)^{\frac54}+T^\eps H^2
}
in any case.
\end{theorem}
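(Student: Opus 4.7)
The plan is to reduce Theorem~\ref{qdp} to the unbalanced version Proposition~\ref{dsff} by means of a smooth dyadic decomposition, and then to reorganize the resulting main term into the closed form involving $\widetilde{f}_{\alpha, \beta, \gamma, \delta}$. First I would insert a smooth partition of unity in each of the four variables $m_1, m_2, n_1, n_2$, splitting the sum into $O(T^\varepsilon)$ dyadic blocks with parameters $M_1, M_2, N_1, N_2$. The rapid decay of $K$ restricts us to $M_1 M_2 N_1 N_2 \ll X^2 T^\varepsilon$, while the relation $a m_1 m_2 - b n_1 n_2 = h$ with $h \asymp H$ forces $A M_1 M_2 \asymp B N_1 N_2$ on the nontrivial ranges. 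The shifts $m_i^{-\alpha}$, $m_i^{-\beta}$ and so on, together with the smooth variation of $f$ and $K$ on each block, are absorbed into the smooth weights $W_0, W_1, \ldots, W_4$ of the type required by Proposition~\ref{dsff}.

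The sum is invariant under the pair of swaps $(m_1, \alpha) \leftrightarrow (m_2, \beta)$ and $(n_1, \gamma) \leftrightarrow (n_2, \delta)$. Splitting the dyadic parameters according to $M_1 \lessgtr M_2$ and $N_1 \lessgtr N_2$ and using these symmetries reduces us to the \emph{unbalanced} regime $M_1 \leq M_2$, $N_1 \leq N_2$; the four sign-combinations yield, after relabeling, the four main terms $\mathcal M_{\alpha, \beta, \gamma, \delta}$, $\mathcal M_{\beta, \alpha, \gamma, \delta}$, $\mathcal M_{\alpha, \beta, \delta, \gamma}$, $\mathcal M_{\beta, \alpha, \delta, \gamma}$. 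Applying Proposition~\ref{dsff} to the unbalanced block produces a main term indexed by $d = (a m_1, b n_1)$, involving the outer variables $a, b, m_1, n_1, h$ together with an integral over a continuous parameter $x$ playing the role of the $m_2$-variable (equivalently $n_2$, since $a m_1 m_2 \approx b n_1 n_2$). Writing $d = g$, $a = g a'$, $b = g b'$ with $(a', b') = 1$ and unraveling the remaining gcd conditions between $m_1, n_1$ and $a', b'$, I would Mellin-invert the $h$-weight to obtain the factor $\widehat f_3(x, x, s)\,\zeta(s)$, then identify the resulting Dirichlet series in $m_1, n_1$ with $\zeta(1 + \alpha - \beta + \gamma - \delta + s)\,\eta_{\alpha, \beta, \gamma, \delta, a', b'}(0, 0, s)$, and finally reassemble the dyadic $M_2, N_2$-pieces together with the $x$-integral. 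The result is exactly $\widetilde{f}_{\alpha, \beta, \gamma, \delta}(x, x; a', b', g)$, and hence the full expression $\mathcal M_{\alpha, \beta, \gamma, \delta}$.

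For the error, I would sum the bounds from Proposition~\ref{dsff} over the $O(T^\varepsilon)$ dyadic blocks, substituting $M_1 M_2 N_1 N_2 \ll X^2 T^\varepsilon$. The sharper version of Proposition~\ref{dsff}, valid when $H \ll (AB)^{1/2+\varepsilon}$, yields~\eqref{mb}, while~\eqref{bwa} yields~\eqref{wb} in general. The principal obstacle is the local identification described above: one must verify by a direct prime-by-prime computation that, after expanding the gcd condition $(a m_1, b n_1) = d$ and taking the Mellin transform of the resulting Dirichlet series, the local factor at each prime matches that of $\eta_{\alpha, \beta, \gamma, \delta, a, b}(0, 0, s)$ in~\eqref{etanew}, and that no stray poles or residues are lost when shifting the contour back to $\tRe(s) = 1 + \varepsilon$. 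Once this combinatorial/Euler-product identity is in hand, the rest of the argument is a careful piece of bookkeeping between the dyadic resummation, the continuous limit of the $m_2, n_2$-summation encoded by the $x$-integral, and the reinstatement of $f$ and $K$ through their Mellin transforms.
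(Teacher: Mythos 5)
Your high-level plan is the same as the paper's — dyadic decomposition, apply Proposition~\ref{dsff} in the unbalanced regime, then reorganize the main term via gcd manipulations and Mellin inversion — but you have glossed over the key technical device, and this creates a genuine gap. You propose to split into four cases by the \emph{hard} dichotomies $M_1 \lessgtr M_2$ and $N_1 \lessgtr N_2$ on the dyadic labels. The paper instead introduces a smooth function $g$ with $g(x) + g(1/x) = 1$ (and with $\widehat{g}$ vanishing at $\pm(\alpha-\beta)/2$ and $\pm(\gamma-\delta)/2$), multiplies by $\big(g(m_1/m_2)+g(m_2/m_1)\big)\big(g(n_1/n_2)+g(n_2/n_1)\big) = 1$, and then dyadically decomposes. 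This is not a cosmetic difference. The factor $g(m_1/m_2)$ both enforces the localization $M_1 \ll M_2 T^\varepsilon$ needed for Proposition~\ref{dsff} (without boundary issues that a hard split would create at $M_1 \approx M_2$), and — more importantly — after Mellin inversion it supplies the two extra complex variables $u, v$ whose residues at $u=v=0$ produce precisely the factor $\zeta(1+\alpha-\beta)\zeta(1+\gamma-\delta)/\zeta(2+\alpha-\beta+\gamma-\delta)$ appearing in front of $\mathcal{M}_{\alpha,\beta,\gamma,\delta}$.

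In your version, after Proposition~\ref{dsff} the main term carries smooth dyadic cutoffs $W(m_1/M_1)$, $W(n_1/N_1)$ plus the hard constraint $M_1 \leq M_2$, and you propose to ``reassemble the dyadic pieces.'' But the Dirichlet series in $m_1, n_1$ cited in your proposal,
\[
\sum_{\substack{(m_1,n_1)=d\\(m_1,b/k)=1\\(n_1,a/\ell)=1}}\frac{d^{1-s}}{m_1^{1+\alpha-\beta}\,n_1^{1+\gamma-\delta}},
\]
is only conditionally convergent (or has a pole at $\alpha=\beta$), and the resummation cannot simply be performed with these cutoffs removed. In the paper, the Mellin variables $u, v$ from $\widehat{g}$ shift the exponents to $1+\alpha-\beta+2u$, $1+\gamma-\delta+2v$ on the line $\tRe\,u = \tRe\,v = \varepsilon$, making the sums absolutely convergent; moving these contours to $\tRe\,u = \tRe\,v = -1/4+\varepsilon/2$ then produces the $\zeta(1+\alpha-\beta)\zeta(1+\gamma-\delta)$ coefficients as residues, while the prescribed zeros of $\widehat{g}$ prevent the collection of spurious poles. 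You identify the factor $\zeta(s)\zeta(1+\alpha-\beta+\gamma-\delta+s)$ and the Euler factor $\eta$ correctly, but you never explain where the remaining ratio of zeta values comes from, and under a hard dyadic split it does not arise cleanly. That is the missing idea: you need the smooth interpolation $g(x)+g(1/x)=1$, or some equivalent analytic regularization, both to make the four-term decomposition well defined and to extract the closed form of the main term by a residue computation.
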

\begin{proof}
First notice that we can replace the assumption~\eqref{asof} by a stronger one,
\est{
\frac{\partial ^{i+j+k}}{\partial x^i\partial y^j\partial z^k}f(x,y,z)\ll_r T^{\eps}(1+x)^{-i}(1+y)^{-j}(1+z)^{-k}(1+xy/ABX^2T^{\eps})^{-r}
}
for any $i,j,k,r\geq0$, since both $\mathcal S$ and the main terms $\mathcal M$ change by a negligible amount when multiplying $f$ by $\kappa(xy/ ABX^2T^{\eps})$, where $\kappa(x)$ is a smooth function which is identically $1$ for $x\leq1$ and decays faster than any polynomial at infinity. 

We let $g$ be a smooth function such that
$$
g(x) + g(1/x) = 1
$$
for all $x \in \mathbb{R}$ and $g(x) \ll_{r} (1 + x)^{-r}$ for any fixed $r>0$ and $x > 1$.
We also require that
$$
\widehat{g}\Big(\pm \frac{(\alpha - \beta)}{2}\Big) =\widehat{g}\Big (\pm\frac{ (\gamma - \delta)}{2}\Big ) =  0.
$$
Introducing the product
$$
\bigg ( g \Big ( \frac{m_1}{m_2} \Big ) + g \Big ( \frac{m_2}{m_1} \Big ) \bigg )
 \bigg ( g \Big ( \frac{n_1}{n_2} \Big ) + g \Big ( \frac{n_2}{n_1} \Big ) \bigg ) = 1
$$
we obtain four roughly similar terms. For simplicity we will focus on only one of them, say,
the one with $g(\frac{m_1}{m_2})g(\frac{n_1}{n_2})$. 

We apply a dyadic partition of unity to the sums over $m_1, m_2, n_1, n_2$ and $h$. Let $W$ be a smooth non-negative function supported in $[1, 2]$ such that
$$
\sum_{M} W \Big ( \frac{x}{M} \Big ) = 1,
$$
where $M$ runs over a sequence of real numbers with $\#\{M: Y^{-1}\leq M\leq Y\}\ll \log Y$. With this partition of unity, we re-write our sum as
\begin{align*}
 \mathcal S=\sum_{M_1,M_2,N_1,N_2,H'} S(M_1, M_2, N_1, N_2,H') + O_A(T^{-A}),
\end{align*}
where
\begin{align*}
&S(M_1, M_2, N_1, N_2,H') \\
&\qquad\qquad= \sum_{a m_1 m_2 - b n_1 n_2 = h>0} \frac{\alpha_a \beta_b}{m_1^{\alpha} m_2^{\beta}
n_1^{\gamma} n_2^{\delta}} f(a m_1 m_2, b n_1 n_2, h) K(m_1m_2n_1n_2)  \\ & \qquad\qquad\qquad\qquad  g \Big ( \frac{m_1}{m_2} \Big )g \Big ( \frac{n_1}{n_2} \Big )
W \Big ( \frac{h}{H'} \Big )W \Big ( \frac{m_1}{M_1} \Big ) W \Big ( \frac{m_2}{M_2} \Big )W\Big ( \frac{n_1}{N_1} \Big ) W \Big ( \frac{n_2}{N_2}
\Big ) .
\end{align*} 
Notice that we can assume $H'\asymp H$ by our assumption on $f$. 
Using the estimates for $g$ and $K$ we obtain
\begin{align*}
\mathcal S=\sum_{\substack{M_1,M_2,N_1,N_2,H'\\ M_1\leq M_2 T^\eps,\, N_1\leq N_2 T^\eps\\ M_1M_2N_1N_2\ll X^2 T^\eps}} S(M_1, M_2, N_1, N_2,H') +O_A(T^{-A}).
\end{align*}

We now separate variables in $S(M_1, M_2, N_1, N_2,H')$ by introducing the Mellin inversions,
\est{
&f(x,y, h)= \frac{1}{(2\pi i)^3} \int_{(\eps)} \int_{(\eps)} \int_{(\eps)}\widehat{f}(s,w, z) x^{-s} y^{-w} h^{-z} ds dw dz ,\\
&g(x) = \frac{1}{2\pi i} \int_{(\eps)} \widehat{g}(u) x^{-u} du\qquad\textrm{and}\qquad K(x) = \frac{1}{2\pi i} \int_{(\eps)} \widehat{K}(\nu) x^{-\nu} d\nu.
}
Note that $g(x)$ has a simple pole at $u = 0$ with residue $1$.
Thus,
\begin{align*}
& S(M_1, M_2, N_1, N_2,H') =\frac{1}{(2\pi i)^6} \int_{(\varepsilon)}  \hspace{-0.4em}\cdots \int_{(\varepsilon)}\sum_{a m_1 m_2 - b n_1 n_2 = h > 0} \frac{\alpha_a\beta_b}{a^s b^{w}}h^{-z} W\Big ( \frac{h}{H'} \Big )  \\ &\qquad  
m_1^{-s - u -\nu- \alpha}W \Big ( \frac{m_1}{M_1}
\Big )  m_2^{-s + u-\nu - \beta}W \Big ( \frac{m_2}{M_2} \Big )n_1^{-w - v-\nu - \gamma}W \Big ( \frac{n_1}{N_1} \Big ) n_2^{-w + v -\nu- \delta}W \Big ( \frac{n_2}{N_2} \Big )  \\
&\qquad\qquad\widehat{f}(s,w, z) \widehat K(\nu)  \widehat{g}(u) \widehat{g}(v)
 ds dwdz du dv d\nu.
\end{align*}

Now we apply Proposition 3.1\footnote{To be more precise, we need to truncate the complex integrals at height $\pm T^\eps$ before applying Proposition 3.1 and re-extend them afterwards, as can be done at a negligible cost thanks to~\eqref{bndhatf}.} to transform the above expression into
\begin{align*}
& \frac{1}{(2\pi i)^6} \int_{(\eps)} \ldots \int_{(\eps)}\int_{0}^{\infty} \sum_{\substack{a,b,m_1,n_1,h,d\\(a m_1, b n_1) = d  }}
 \frac{\alpha_a\beta_b}{a^s b^{w}} (hd)^{-z} m_1^{-s -u-\nu -\alpha}\Big ( \frac{b n_1 x}{d} \Big )^{-s + u-\nu - \beta}\\
&\qquad  n_1^{-w -v-\nu - \gamma}\Big ( \frac{a m_1 x}{d} \Big )^{-w +v -\nu- \delta}W\Big ( \frac{dh}{H'} \Big )W \Big ( \frac{m_1}{M_1} \Big )W \Big ( \frac{b n_1 x}{d M_2}
\Big )W \Big ( \frac{n_1}{N_1}
\Big )W \Big( \frac{a m_1 x}{d N_2} \Big )   \\ 
 &\qquad\qquad\widehat{f}(s,w, z)  \widehat K(\nu) \widehat{g}(u) \widehat{g}(v)  dx ds dwdz du dv d\nu +\mathcal E_0=\mathcal M_0+\mathcal E_0,
\end{align*}
say, where
\est{
\mathcal E_0&\ll_\eps  T^\eps(A B X^2 H^2)^{\frac14} \big ( AB + H^{\frac14}(A + B)^{\frac12} 
(ABX^2)^{\frac18} \big )\\
&\qquad\qquad\int_{(\varepsilon)}\ldots\int_{(\varepsilon)} \big| \widehat{f}(s,w, z)\widehat K(\nu)\big| | dsdwdzd\nu|   \\
&\ll T^\eps(A B X^2 H^2)^{\frac14} \big ( AB + H^{\frac14}(A + B)^{\frac12} 
(ABX^2)^{\frac18} \big )
}
if $H\ll (AB)^{\frac12+\eps}$, and 
\[
\mathcal E_0 \ll_\eps T^\eps (ABX^2H^2)^\frac38(ABH)^\frac14(A+B)^{\frac54}+T^\eps H^2
\] 
in any case, since the bounds on the derivatives of $f(x, y,z)$ give
\es{\label{bndhatf}
\widehat f(s, w,z) &\ll_{\eps,k} T^\eps X^{2\tRe(\nu)}H^{\tRe( z)} \big ((ABX^2)^{\tRe( s)}+(ABX^2)^{\tRe( w)}\big)\\
&\hspace{6em} \big((1+|s|)(1+|w|)(1+|z|)(1+\nu)\big)^{-k}
}
for $\tRe (s),\tRe (w),\tRe (z),\tRe(\nu)\geq \eps$ and any $k\geq0$, using integration by parts $k$ times with respect to each variable.

Folding back the Mellin inversions we get 
\begin{align*}
&\mathcal M_0 = \int_{0}^{\infty} \sum_{\substack{a,b,m_1,n_1,h,d\\(a m_1, b n_1) = d}}  \frac{\alpha_a\beta_b}{a^\delta b^{\beta}} W\Big ( \frac{dh}{H'} \Big ) W \Big ( \frac{m_1}{M_1} \Big )W \Big ( \frac{n_1}{N_1} \Big )
 m_1^{-\alpha - \delta} n_1^{- \beta-\gamma } d^{ \beta+\delta}  \\
&\qquad\qquad  f \Big ( \frac{a b  m_1n_1 x}{d} , \frac{a b m_1 n_1 x}{d}, dh\Big )K\Big(\frac{ab(m_1n_1x)^2}{d^2}\Big) 
g \Big ( \frac{d n_1 }{a m_1 x} \Big ) g \Big ( \frac{d m_1}{b n_1 x} \Big )\\
&\qquad\qquad\qquad\qquad  W \Big ( \frac{b n_1 x}{d M_2} \Big ) W
\Big ( \frac{a m_1 x}{d N_2} \Big ) x^{ - \beta-\delta} dx.
\end{align*}
This is summed over all $N_1,N_2,M_1,M_2$ and $H'$ satisfying $M_1\leq M_2 T^\eps$, $N_1\leq N_2 T^\eps$ and $M_1M_2N_1N_2\ll X^2 T^\eps$. These conditions can be removed at the cost of an error of size $O_A(T^{-A})$. This allows us to extend the summation over all $M_1, M_2, N_1, N_2$ and $H'$, and thus to remove the partition of unity. 
In the remaining expression we now make a linear change of the $x$ variable which gives 
\begin{align*}
\mathcal M_1&=\sum_{M_1, M_2, N_1, N_2,H'} \mathcal M_0   \\
&=\int_{0}^{\infty} \sum_{\substack{a,b,m_1,n_1,h,d\\(a m_1, b n_1) = d }} \frac{\alpha_a \beta_b}{a^{1-\beta}b^{1-\delta}} d m_1^{-1-\alpha + \beta} n_1^{-1 -\gamma + \delta}\\
&\qquad\qquad\qquad\qquad f(x,x,dh) K\Big(\frac{x^2}{ab}\Big)g \Big ( \frac{am_1^2}{x} \Big )g \Big ( \frac{bn_1^2}{x} \Big )  x^{ - \beta-\delta} dx.
\end{align*}
We now prepare for the final evaluation of $\mathcal{M}_1$ by expressing $g$ in terms of its Mellin transform and $f(x,x,dh)$ as the inverse Mellin transform of $\widehat{f}_3(x,x,s)$. Then
\begin{align*}
\mathcal M_1 &= \frac{1}{(2\pi i)^3} \int_{(\eps)}\int_{(\eps)} \int_{(1+\eps)}\int_{0}^{\infty}\widehat{f}_3(x,x,s)K\Big(\frac{x^2}{ab}\Big) \widehat{g}(u) \widehat{g}(v) x^{- \beta-\delta  + u + v}\zeta(s)   \\
&\qquad\qquad \sum_{\substack{a,b,m_1,n_1,d\\(a m_1, b n_1) = d  }} \frac{\alpha_a \alpha_b}{a^{1-\beta+u}b^{1-\delta+v}}d^{1-s}m_1^{-1 -\alpha + \beta - 2u}n_1^{-1-\gamma + \delta - 2v}    dx dsdu dv.
\end{align*}

Write $g = (a,b)$ so that $a = g a'$, $b = g b'$ and $(a', b') = 1$. 
In addition $(a' m_1, b' n_1) = d'$ with $d = g d'$. Let $k = (b', m_1)$
and $\ell = (a', n_1)$, and write $m_1 = k m_1'$ and $n_1 = \ell n_1'$. Then 
using $(a', b') = 1$, we see that $(a' m_1, b' n_1) = k \ell (a' m_1', b' n_1')$.
Thus $d = g k \ell d''$ for some $d''$. 
We re-parametrize the above sum by summing over all $g$, all  $k | b'$, $\ell | a'$ and adding the
condition that $(m_1', b'/ k) = 1$ and $(n_1', a' / \ell) = 1$. For notational simplicity
we delete the extraneous superscripts $'$ and $''$ in the resulting formula, 
\begin{align*}
\mathcal M_1 =& \frac{1}{(2\pi i)^3} \int_{(\eps)} \int_{(\eps)} \int_{(1+\eps)}\int_{0}^{\infty}
\widehat{f}_3(x,x,s) K\Big(\frac{x^2}{g^2ab}\Big) \widehat{g}(u) \widehat{g}(v)x^{- \beta-\delta  + u + v}\zeta(s) 
\\ 
&\qquad\qquad  \sum_{g} g^{-1+\beta+\delta-u-v-s}\sum_{(a,b) = 1} \frac{\alpha_{g a} \beta_{g b}}{a^{1  - \beta + u} b^{1 - \delta  + v}} \\
&\qquad\qquad\qquad\qquad\sum_{\substack{k | b \\ \ell | a}} \sum_{\substack{(m_1, n_1) = d \\ (m_1, b/k) = 1 \\ (n_1, a/ \ell) = 1}}
\frac{d^{1-s}k^{ - \alpha + \beta - 2u-s} \ell^{-\gamma+\delta-2v-s}}{m_1^{1 + \alpha - \beta + 2u}n_1^{1 + \gamma - \delta + 2v} }
 dx ds du dv.  
\end{align*}
We now let
\begin{align*}
w = \alpha - \beta + 2u\qquad\textrm{and}\qquad z =  \gamma - \delta + 2v.
\end{align*}
In this way, 
\begin{align} \label{eq:dirseries}
\sum_{\substack{(m_1, n_1) = d \\ (m_1, b/k) = 1 \\ (n_1, a/ \ell) = 1}}
\frac{d^{1-s}}{m_1^{1 + \alpha - \beta + 2u}n_1^{1 + \gamma - \delta + 2v} } & = \sum_{\substack{(m_1, n_1) = d \\ (m_1, b/k) = 1 \\ (n_1, a/ \ell) = 1}}
\frac{d^{1-s}}{m_1^{1 + w}n_1^{1 + z} }.
\end{align}
Since $(a,b) = 1$ the above Dirichlet series factors as 
$$
\prod_{\substack{p \nmid b / k \\ p \nmid a / \ell}} \bigg ( \sum_{j = 0}^{\infty} \frac{p^{j(1 - s)}}{p^{j(2+w + z)}} \sum_{\substack{m,n\geq0,\\\min \{m,n\}=0}} \frac{1}{p^{m (1+w)+n (1+z)}}  \bigg )
\prod_{p | a / \ell}\Big(1- \frac{1}{p^{1+w} }\Big)^{-1}\prod_{p | b / k}\Big(1- \frac{1}{p^{1+z} }\Big)^{-1}.
$$
The expression in the first bracket is
\begin{align*}
&\Big(1-\frac{1}{p^{1+w+z+s}}\Big)^{-1} \Big(\sum_{\substack{m,n\geq0}} \frac{1}{p^{m (1+w)+n (1+z)}}-\sum_{\substack{m,n\geq1}} \frac{1}{p^{m (1+w)+n (1+z)}}\Big)\\
&\qquad\qquad=\Big(1-\frac{1}{p^{1+w+z+s}}\Big)^{-1}\Big(1-\frac1{p^{1+z}}\Big)^{-1}\Big(1-\frac1{p^{1+w}}\Big)^{-1}\Big(1-\frac1{p^{2+w+z}}\Big)
\end{align*}
and thus~\eqref{eq:dirseries} is equal to
\begin{align*}
&\frac{\zeta(1+w+z+ s)\zeta(1+w) \zeta(1+z)}{\zeta(2+w + z)}\\
&\qquad\qquad \prod_{p | a/l}c_p(  w + z+s,  z,  w + z)\prod_{p |b/k}c_p(w + z+s, w, w + z), 
\end{align*}
where 
$$
c_p(x,y,z) = \Big(1-\frac1{p^{1+x}}\Big) \Big(1- \frac{1}{p^{1+y}} \Big)\Big(1- \frac{1}{p^{2+z}}\Big)^{-1}.
$$
Combining everything together we have obtained the following formula
\begin{align*}
&\frac{1}{(2\pi i)^{3}} \int_{(\eps)} \int_{(\eps)} \int_{(1+\eps)} \int_{0}^{\infty}\widehat f_3 (x,x, s)K\Big(\frac{x^2}{g^2ab}\Big) \widehat{g}(u) \widehat{g}(v)\\
&\qquad \frac{\zeta(s)\zeta(1+ \alpha-\beta+\gamma-\delta+2u+2v+s)\zeta(1+\alpha-\beta+2u)\zeta(1+\gamma-\delta+2v) }{\zeta(2+\alpha-\beta+\gamma-\delta+2u+2v)} \\ & \qquad\qquad
x^{ - \beta-\delta + u + v}\sum_{g} g^{1+\beta+\delta-u-v-s} \sum_{(a,b) = 1} \frac{\alpha_{ga} \beta_{gb} \eta_{\alpha,\beta,\gamma,\delta,a,b}(u,v,s) }{a^{1  - \beta +u} b^{1 - \delta  +v}}   dx ds du dv,
\end{align*}
where 
\begin{equation}\label{etanew}
 \eta_{\alpha,\beta,\gamma,\delta,a,b}(u,v,s)= \eta_{\alpha,\beta,\gamma,\delta,a}(u,v,s) \eta_{\gamma,\delta,\alpha,\beta,b}(u,v,s)
\end{equation}
and
\begin{align*}
&\eta_{\alpha,\beta,\gamma,\delta,a}(u,v,s) \\
&\qquad= \sum_{\ell|a} \ell^{  - \gamma+\delta-2v-s} \\
&\qquad\qquad\prod_{p | a /\ell} c_p(  \alpha-\beta+\gamma-\delta+2u+2v+s,  \gamma-\delta+2v,  \alpha-\beta+\gamma-\delta+2u+2v).
\end{align*}

Next we shift the line integration over $u$ towards $\tRe (u) = -1/4 + \varepsilon/2$ and that of $v$ towards $\tRe( v) = -1/4 + \varepsilon/2$. We collect the poles from $u = 0$ and $v = 0$, and for the terms where only one of the two residues is taken we move the other integral to the $(-1/2+\eps)$-line so that for the three resulting error terms we always have $\tRe(u)+\tRe(v)=-1/2+\eps$. We do not collect poles at $u = - ( \alpha - \beta) / 2$ and $v = -(\gamma - \delta)/2$ since we ensured that $\widehat{g}(-(\alpha-\beta)/2) =\widehat{g}(-(\gamma - \delta)/2) = 0$. Since $\widehat f_3 (x,x, s)\ll_\eps T^\eps H$ for $\tRe(s)=1+\eps$, this operation produces an error of size $O_\eps\big(T^{\varepsilon}(ABX^2)^{\frac14 } H(A^{\frac12}+B^{\frac12})\big)$, which is acceptable for $\mathcal E$, and a main term equal to
\begin{align*}
 &\frac{\zeta(1 + \alpha - \beta)\zeta( 1+ \gamma - \delta)}{\zeta(2 + \alpha - \beta + \gamma - \delta)}  \sum_{g} \sum_{(a,b) = 1} \frac{\alpha_{ga} \beta_{gb}g }{(ga)^{1  - \beta }(g b)^{1 - \delta }}\\
&\qquad\qquad\qquad\qquad \int_{0}^{\infty}K\Big(\frac{x^2}{g^2ab}\Big)\widetilde{f}_{   \alpha,\beta,\gamma, \delta}(x, x; a,b,g) x^{- \beta-\delta } dx,
\end{align*}
where $\widetilde{f}_{ \alpha, \beta,\gamma, \delta}(x, x; a,b,g)$ is equal to 
\begin{align*}
\frac{1}{2\pi i} \int_{(1+\eps)}\widehat{f}_3(x, x, s)\zeta(s) \zeta(1 + \alpha - \beta+ \gamma - \delta  + s) g^{- s} \eta_{\alpha,\beta,\gamma,\delta,a,b}(0,0,s) 
ds,
\end{align*}as desired.
\end{proof}

\begin{corollary}
Let $A,B,X,Z,T\geq 1$ with $Z>XT^{-\eps}$ and $\log(ABXZ)\ll \log T$. Let $\alpha_a,\beta_b$ be sequences of complex numbers supported on $[1,A]$ and $[1,B]$, respectively, and such that $\alpha_a\ll A^\eps, \beta_b\ll B^\eps$. Let $f\in\mathcal C^{\infty}(\R_{\geq0}^{3})$ be such that
\est{
\frac{\partial ^{i+j+k}}{\partial x^i\partial y^j\partial z^k}f(x,y,z)\ll_{i,j,k,r} T^{\eps}(1+x)^{-i}(1+y)^{-j}(1+z)^{-k}\Big(1+\frac{z^2Z^2}{xy}\Big)^{-r}
}
for any $i,j,k,r\geq0$. Let $K\in\mathcal C^{\infty}(\R_{\geq0})$ be such that $K^{(j)}(x)\ll_r T^\eps (1+x)^{-j}(1+x/X^2)^{-r}$ for $0\leq j\leq2$ and any $r\geq0$. Then~\eqref{mtf} holds with the error term $\mathcal E$ bounded by
\begin{align*}
\mathcal E  \ll& T^{\eps} (AB)^\frac12 XZ^{-\frac12}  \Big ( AB + (A+ B)^{\frac12} (AB)^\frac14X^\frac12Z^{-\frac14} \Big ).
\end{align*}
\end{corollary}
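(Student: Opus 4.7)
The plan is to deduce this corollary from Theorem~\ref{qdp} via dyadic decomposition on the supports of $\alpha_a$, $\beta_b$ and on the variable $h$. Fix a smooth bump $W$ on $[1,2]$ and write $\alpha_a = \sum_{A'} \alpha_a W(a/A')$, $\beta_b = \sum_{B'} \beta_b W(b/B')$ with $A' = 2^j \leq A$, $B' = 2^k \leq B$, and insert a partition of unity $\sum_{H'} W(z/H') = 1$ in the $z$-variable. This decomposes $\mathcal S$ into $O((\log T)^3)$ pieces $\mathcal S_{A',B',H'}$, each in the form required by Theorem~\ref{qdp} with sequences supported on $[A', 2A']$ and $[B', 2B']$ and with $f$ replaced by the localization $f(x,y,z) W(z/H')$, now supported in $z \in [H', 2H']$ and satisfying the required derivative bounds (the $(1+z)^{-k}$ scaling follows from $z \asymp H'$ on the support).

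The decay hypothesis $\partial_z^k f \ll (1+z)^{-k}(1 + z^2 Z^2/(xy))^{-r}$, combined with the rapid decay of $K(m_1m_2n_1n_2)$ past $X^2$, forces each piece to be negligible unless $H' \ll \sqrt{xy}\, Z^{-1} T^\eps \ll \sqrt{A'B'}\, X Z^{-1} T^\eps$. Using $Z > X T^{-\eps}$ this gives $H' \ll (A'B')^{1/2} T^{2\eps}$, placing us (for $\eps$ chosen small) in the regime $H' \ll (A'B')^{1/2+\eps}$ in which the sharper error bound~\eqref{mb} of Theorem~\ref{qdp} is applicable. Substituting $H'^{2} \ll A'B' X^2 Z^{-2} T^\eps$ into~\eqref{mb} and simplifying gives
\[
\mathcal E_{A',B',H'} \ll T^\eps (A'B')^{1/2} X Z^{-1/2}\Big(A'B' + (A'B')^{1/4}(A'+B')^{1/2} X^{1/2} Z^{-1/4}\Big).
\]
Summing over the $O((\log T)^3)$ dyadic pieces costs only logarithmic factors, absorbed into $T^\eps$, with the dominant contribution coming from $A' \asymp A$, $B' \asymp B$; this yields the claimed total error. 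The main terms $\mathcal M_{\alpha,\beta,\gamma,\delta}$ produced by Theorem~\ref{qdp} are linear both in the pair $(\alpha_a, \beta_b)$ and in $f$, so the dyadic pieces recombine precisely to the single expression $\mathcal M_{\alpha,\beta,\gamma,\delta}$ stated in the corollary, because the partitions of unity reconstruct $\alpha_a$, $\beta_b$, and $f$ on their relevant supports.

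The main obstacle is the uniform verification across all dyadic parameters that we stay in the regime where~\eqref{mb} (rather than the weaker~\eqref{wb}) applies, and this is precisely where the hypothesis $Z > X T^{-\eps}$ is essential. Corner cases where $A'B'$ is very small contribute negligibly and can be handled either by counting the few surviving terms directly or by appealing to the unconditional bound~\eqref{wb}; in either case the overall error estimate claimed in the corollary is preserved.
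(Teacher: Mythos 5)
Your proof is correct and follows essentially the same route as the paper: dyadic decomposition in $a,b,h$, using the factor $(1+z^2Z^2/(xy))^{-r}$ in the hypothesis on $f$ together with the decay of $K$ to force $H'\ll (A'B')^{1/2}XZ^{-1}T^\eps\ll(A'B')^{1/2+\eps}$, invoking bound~\eqref{mb} in that regime, and handling the small-$A'B'$ range via~\eqref{wb}. The paper is slightly more explicit about the small-$A'B'$ case, computing that its contribution is $\ll T^\eps X^{3/2}Z^{-3/4}$ and hence dominated by the target bound, but the strategy is identical.
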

\begin{proof}
We divide the summations over $a,b,h$ using partitions of unity localizing $a\asymp A'$, $b\asymp A'$, $h\asymp H'$ and notice that by~\eqref{wb} the error term coming from the terms with $A'B'\ll T^{ \eps}$ is bounded by $T^{\eps}  X^{\frac74}Z^{-1}\ll T^\eps X^{\frac32}Z^{-\frac34}$ . For the terms with $A'B'\gg T^{ \eps}$ we observe that the contribution from the terms with 
 $H'\gg T^\eps (A'B')^{\frac12}XZ^{-1}$ is negligible, whereas for the remaining terms we have $H'\ll T^\eps(A'B')^{\frac12}XZ^{-1}\ll (A'B')^{\frac12+\eps}$ and we can apply~\eqref{mb}. Summing back over the partitions of unity then gives the claimed result.
\end{proof}

\end{document}